\newtheorem{thm}{Theorem}[section]
\newtheorem{prop}[thm]{Proposition}
\newtheorem{lem}[thm]{Lemma}
\newtheorem{rem}[thm]{Remark}
\newcommand{\be}{\begin{equation}}
\newcommand{\ee}{\end{equation}}
\begin{document}
%
%
%
%
\baselineskip=15pt

\title{A symmetric low-regularity integrator for the \\ nonlinear Schr\"odinger equation}

\author{Yvonne Alama Bronsard
	\thanks{\texttt{Sorbonne Universit\'e, Laboratoire Jacques-Louis Lions (LJLL), F-75005 Paris, France.\newline
	email: yvonne.alama\_bronsard@sorbonne-universite.fr}}}

\maketitle

\begin{abstract}
We introduce and analyze a symmetric low-regularity scheme for the nonlinear Schr\"odinger (NLS) equation 
beyond classical Fourier-based techniques.
We show fractional convergence of the scheme in $L^2$-norm, from first up to second order, both on the torus $\mathbb{T}^d$ and on a smooth bounded domain $\Omega \subset \mathbb{R}^d$, $d\le 3$, equipped with homogeneous Dirichlet boundary condition. The new scheme allows for a symmetric approximation to the NLS equation in a more general setting than classical splitting, exponential integrators, and low-regularity schemes (i.e. under lower regularity assumptions, on more general domains, and with fractional rates). We motivate and illustrate our findings through numerical experiments, where we witness better structure preserving properties and an improved error-constant in low-regularity regimes.
\end{abstract}

\section{Introduction}
We consider the nonlinear Schr\"odinger (NLS) equation,
\be \label{NLS}
i  \partial_t u(t,x) = - \Delta u(t,x)  + \vert u(t,x)\vert^2 u(t,x),\quad (t,x) \in \mathbb{R}\times  \Omega 
\ee
with
$\Omega \subset \mathbb{R}^d$ or $\Omega = \mathbb{T}^d$, $d \le 3$, and an initial condition
\begin{equation}
\label{init}
u_{|t=0}= u_{0}.
\end{equation}
When $\partial \Omega \neq \emptyset$, we assume that $\Omega$ is a smooth bounded domain and we assign homogeneous boundary conditions which will be encoded in the choice of the domain of the operator $i\Delta$.
In the convergence analysis we will consider either periodic or homogeneous Dirichlet boundary conditions. Nevertheless, one could also consider different types of boundary conditions such as homogeneous Neumann boundary conditions by defining the functional spaces accordingly (see Section \ref{sec:notation}). 

Throughout this article we will be interested in providing a reliable approximation of \eqref{NLS} when the initial data $u_0$ are non-smooth, in the sense that they belong to Sobolev spaces of low order. Namely, we will be interested in studying numerical schemes which approximate the time dynamics of \eqref{NLS} at low regularity.
The numerical study of low-regularity approximations to nonlinear evolution equations has gained lots of attention in the past years, and numerous contributions have been made in this direction. The first results were established on the torus for the Korteweg-De Vries (KdV) equation and then the NLS equation with the pioneering works of \cite{HofS} and \cite{OS1}. These results could be further greatly extended, see for instance \cite{WZhao22,mKdV,RSKDV,LW22} and \cite{ORS2,WuYao22,LW21,CLLsecondNLS} for the KdV and NLS equations respectively. 
 More types of dispersive equations could be dealt with, including for example the Dirac equation \cite{S_Dirac} or the Klein-Gordon equation (\cite{CS_KG}), and a general framework for constructing low-regularity approximations up to arbitrary order and for a class of dispersive equations on the torus was obtained in \cite{BS}.

The construction of these time integrators (called {\it resonance-based schemes}, {\it exponential-type low-regularity integrators}, or {\it Fourier integrators}) strongly depended on Fourier-based expansions, and hence were restricted to periodic boundary conditions.
Recently, this restriction was withdrawn to treat more general domains $\Omega \subset \mathbb{R}^d$ and boundary conditions,  as well as more general nonlinearities (see \cite{RS,YGP,ABBSBdd,lowNeumann, NS}).

The general aim of low-regularity integrators is that they converge under lower regularity assumptions, contrarily to classical methods (see \cite{ABBSBdd} for a comparative analysis on general smooth domains).
Their major drawback is that they do not preserve the geometric structure of the underlying system. The NLS equation \eqref{NLS} is time reversible, meaning that $\overline{u(-t,x)}$ is again solution of \eqref{NLS}, and both the density and the energy are conserved quantities:
\begin{align}\label{conserved}
\left\{\begin{array}{ccc}
\|u(t)\|_{L^2} &=& \|u_0\|_{L^2},\\ 
E(t) &=& E_0,
\end{array}
\right.\quad  t\in I,
\end{align}
where $I$ is the interval of existence of the solution, and where for $H^1$-solutions we have
$$
E(t) = \frac{1}{2} \int |\nabla u|^2(t,x) dx + \frac{1}{4}\int |u|^4(t,x) dx.
$$
Hence, when designing a numerical scheme it is natural to take into account both of these conserved quantities, and to retain (as much as possible) these properties also on the discrete level by introducing so-called structure preserving schemes, see \cite{HLW} for an extensive introduction on the subject. The latter has received great interest thanks to their good long-time near-preservation of the actions of the integrable properties of the equation, and have been successfully studied in the past for the approximation of the NLS equation \eqref{NLS}. 
Examples of such schemes are splitting schemes (\cite{L}, \cite{Faou}), relaxation type schemes (\cite{Besse}), symmetric exponential integrators (\cite{SymmExp}) or Crank-Nicolson Galerkin methods (\cite{Peterseim}), just to name a couple of them. 
For an overview of symmetric methods for NLS see \cite{SymmExp, Faou}.
While these classical structure preserving schemes provide excellent approximations to smooth solutions in general even up to long times, they often break down and lead to severe loss of convergence for non-smooth solutions. Low-regularity integrators which are suited for non-smooth solutions on the other hand do not preserve the structure of the underlying equation. The natural question which thus arises is: {\it What about low-regularity structure preserving schemes for solving the NLS equation \eqref{NLS}?} 
Only very little is known in this direction, see the work of \cite{GK} on the KdV equation, \cite{WZ22} on the cubic Klein-Gordon equation, and \cite{VMS} on the isotropic Landau--Lifschitz equation.
Also worth to be mentioned is the work of \cite{WuYao22} which introduces for the first time a first-order Fourier integrator for the NLS equation \eqref{NLS} set on $\mathbb{T}$ which almost conserves the mass.

In this article, we introduce a {\it symmetric} low-regularity integrator for solving the NLS equation \eqref{NLS} which allows for low-regularity approximation while maintaining good long-time preservation of the two conserved properties \eqref{conserved} on the discrete level. We carry-out a rigorous convergence analysis in $L^2(\Omega)$ on smooth domains $\Omega \subset \mathbb{R}^d$ and obtain improved error estimates at low-regularity compared to classical symmetric methods. Our numerical findings not only show better structure preservation properties but also show a much better error constant at low-regularity than previously proposed methods (see Figure \ref{fig:cv}). 

In the finite dimensional ODE setting it is well-known that symmetric methods are of even order. In the context of PDEs this is a much more delicate question as convergence is met only when sufficient regularity assumptions are imposed on the solution. Thanks to the gain of symmetry, we show second order $L^2$-convergence of the symmetric scheme under less regularity assumptions than what is required by classical symmetric schemes (\cite{L, Besse, Peterseim}), while asking for slightly more regularity than asymmetric second-order low-regularity schemes (\cite{YGP}) which however do not preserve the structure of the system (see Figures \ref{fig:mass}  and \ref{fig:energy}). Optimal first order low-regularity convergence rates could be obtained.
 See Section \ref{sec:results} for a detailed discussion on the subject.

The scheme we present here is based on the first-order low-regularity scheme first introduced in \cite{OS1} which is given by,
\begin{equation}\label{LRexpl}
\Phi_\tau(u^n) := e^{i\tau \Delta}\left( u^n - i \tau (u^n)^2 \varphi_1(-2i \tau\Delta)\overline{u^n} \right), \quad u^0 = u_0,
\end{equation}
where $\varphi_1(z)=\frac{e^{z}-1}{z}$, and $\tau$ is the time step.
 In order to symmetrize the above scheme we introduce the adjoint method as the map
$$
{\hat \Phi_\tau} = \Phi^{-1}_{-\tau},
$$
and compute (see \cite{HLW})
$$
{\hat \Phi_{\tau/2}} \circ \Phi_{\tau/2}.
$$
This yields the following implicit symmetric low-regularity scheme,
\begin{align}\label{num}
u^{n+1} = \varphi^\tau (u^n)&=  e^{i\tau\Delta} u^n - i \frac{\tau}{2} e^{i\tau\Delta} \left( (u^n)^2 \varphi_1(-i\tau\Delta) \overline{u^n} \right) - i\frac{\tau}{2} \left( (u^{n+1})^2\varphi_1(i\tau \Delta)\overline{u^{n+1}} \right)\\ \nonumber
&= e^{i\tau\Delta} u^n + \psi_{E}^{\tau/2}(u^n) + \psi_{I}^{\tau/2}(u^{n+1})\\ \nonumber
&= e^{i\tau\Delta} u^n + \Psi^\tau(u^n,u^{n+1}),
\end{align}
which satisfies the discrete analogue of the time-reversible property of \eqref{NLS}.

We highlight the properties which the scheme \eqref{num} inherits through numerical experiments, where we couple the time-integrators with the standard Fourier pseudo-spectral method which encodes periodic boundary conditions.
The case of homogeneous Dirichlet boundary conditions remains very similar, yet for completeness we also include a convergence plot in this case where we expand the solution as a sine series expansion. First, in the case of periodic boundary conditions, we observe in Figures \ref{cv:a} and \ref{cv:b} the favorable convergence properties of the scheme \eqref{num} for $H^1$ and $H^2$ data respectively. 
We notice that the error constant of the symmetric scheme is much better than the asymmetric first-order  low-regularity integrator (Low-reg 1), and is also better than the asymmetric second-order low-regularity integrator (Low-reg 2). Figure \ref{cv:c} similarly shows the favorable convergence behaviour when considering homogeneous Dirichlet boundary conditions.
Secondly, we study in Figures \ref{fig:mass} and \ref{fig:energy} the structure preserving properties of the new symmetric low-regularity integrator \eqref{num} against previous asymmetric low-regularity integrators. We witness that the asymmetric first and second-order low-regularity integrators (Low-reg 1, Low-reg 2) are unable to preserve the density and energy (see \eqref{conserved}), whereas the symmetric integrator \eqref{num} appears to nearly-preserve both conserved properties over long-times. 
We note that in the finite dimensional ODE setting a general theory for symmetric methods applied to integrable reversible systems has been established in \cite{HLW} allowing for long-time near-conservation of first-integrals. In the infinite dimensional case the understanding of the long-time behaviour of numerical solutions is an ongoing challenge in the field of geometric integration and few results are known, see for example \cite{Faou, FGP, GL, CHL}.
We expect that it would be possible to prove long-time near-preservation of the density and energy of the scheme \eqref{num} by using the results of \cite{HLW}, and by benefitting of an analysis using modulated Fourier expansions (see \cite{CHL, GL}) or using normal form techniques (\cite{FGP,BG}) to show near-conservation of the energy. This delicate analysis is out of scope for this paper, where here we focus on the low-regularity error estimates on the solution itself.
Finally, we refer to Figure \ref{CPU} for a broad indication of the relative computational cost of each of the three low-regularity integrators, and discuss the added cost of implementing the symmetric implicit scheme \eqref{num}. We observe that the asymmetric second-order low-regularity integrator (Low-reg 2) costs in CPU-time approximately the same as the symmetric integrator \eqref{num}. Whereas when comparing with the asymmetric first-order scheme (Low-reg 1) we have that the improved convergence properties of the scheme \eqref{num} make up for the extra cost of solving the implicit system \eqref{num} at every time step.

\begin{figure}[H]

\begin{minipage}{.5\linewidth}
\centering
\subfloat[]{\label{cv:a}\includegraphics[scale=.40]{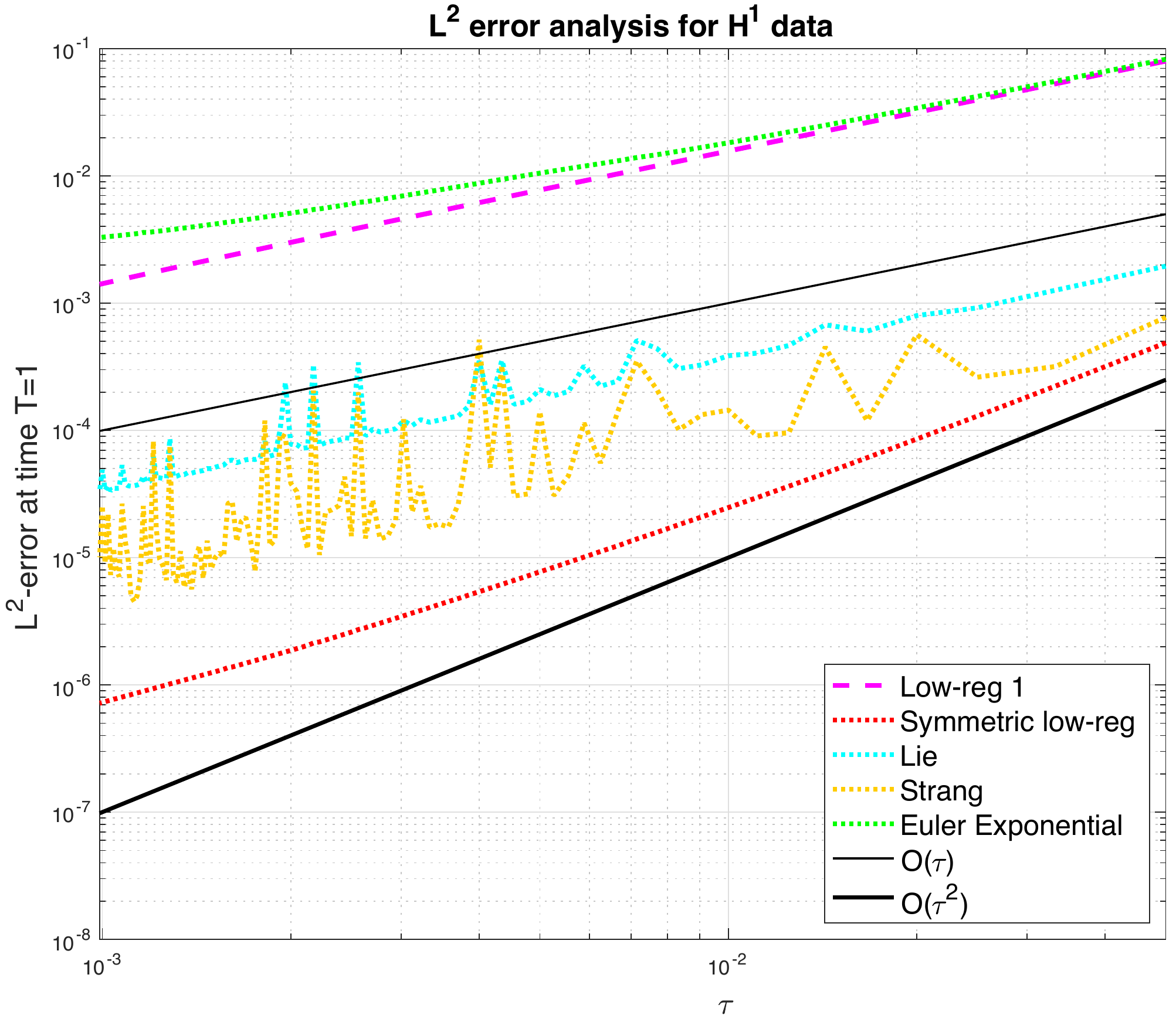}}
\end{minipage}%
\begin{minipage}{.5\linewidth}
\centering
\subfloat[]{\label{cv:b}\includegraphics[scale=.40]{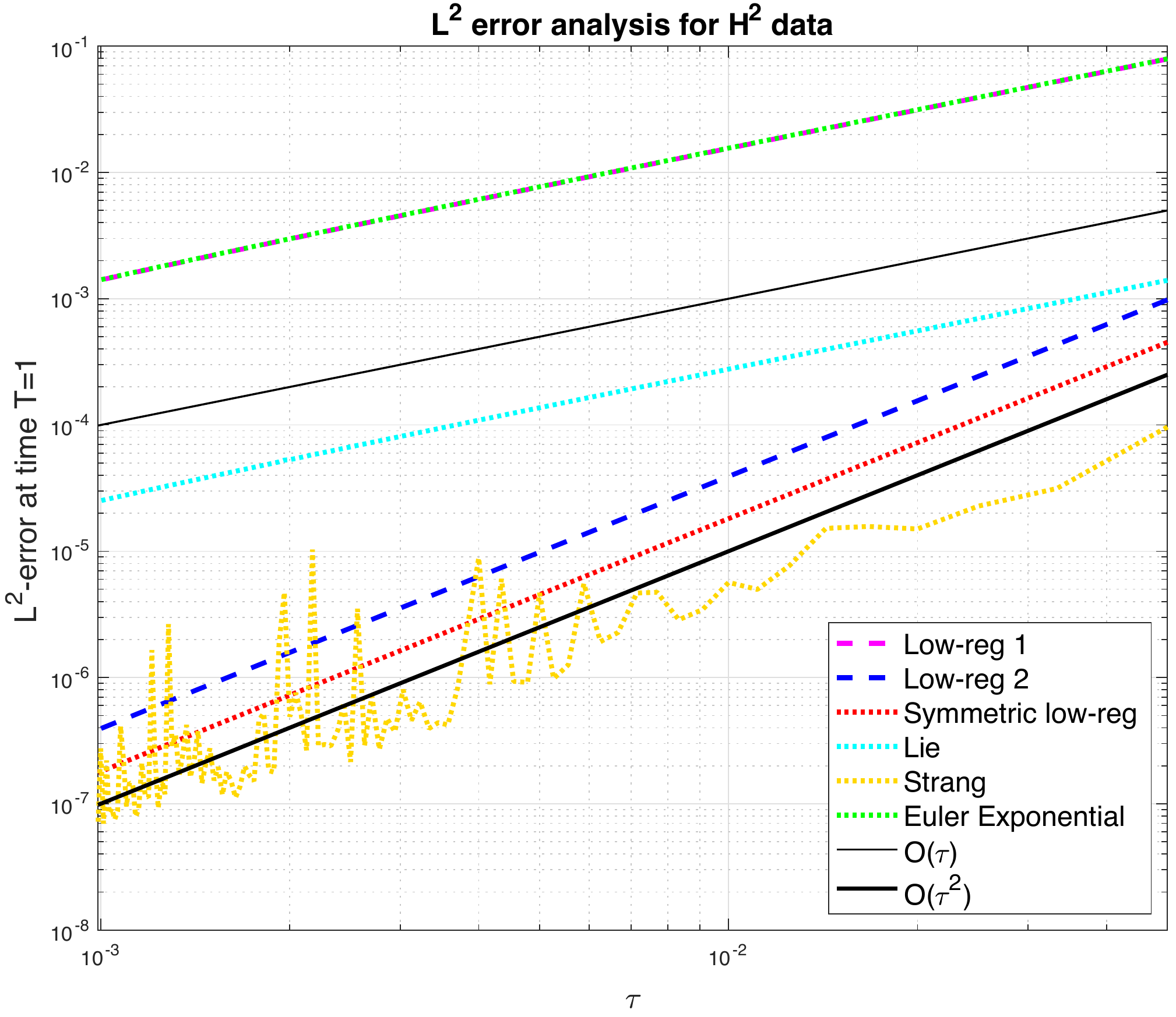}}
\end{minipage}\par\medskip
\centering
\subfloat[]{\label{cv:c}\includegraphics[scale=.43]{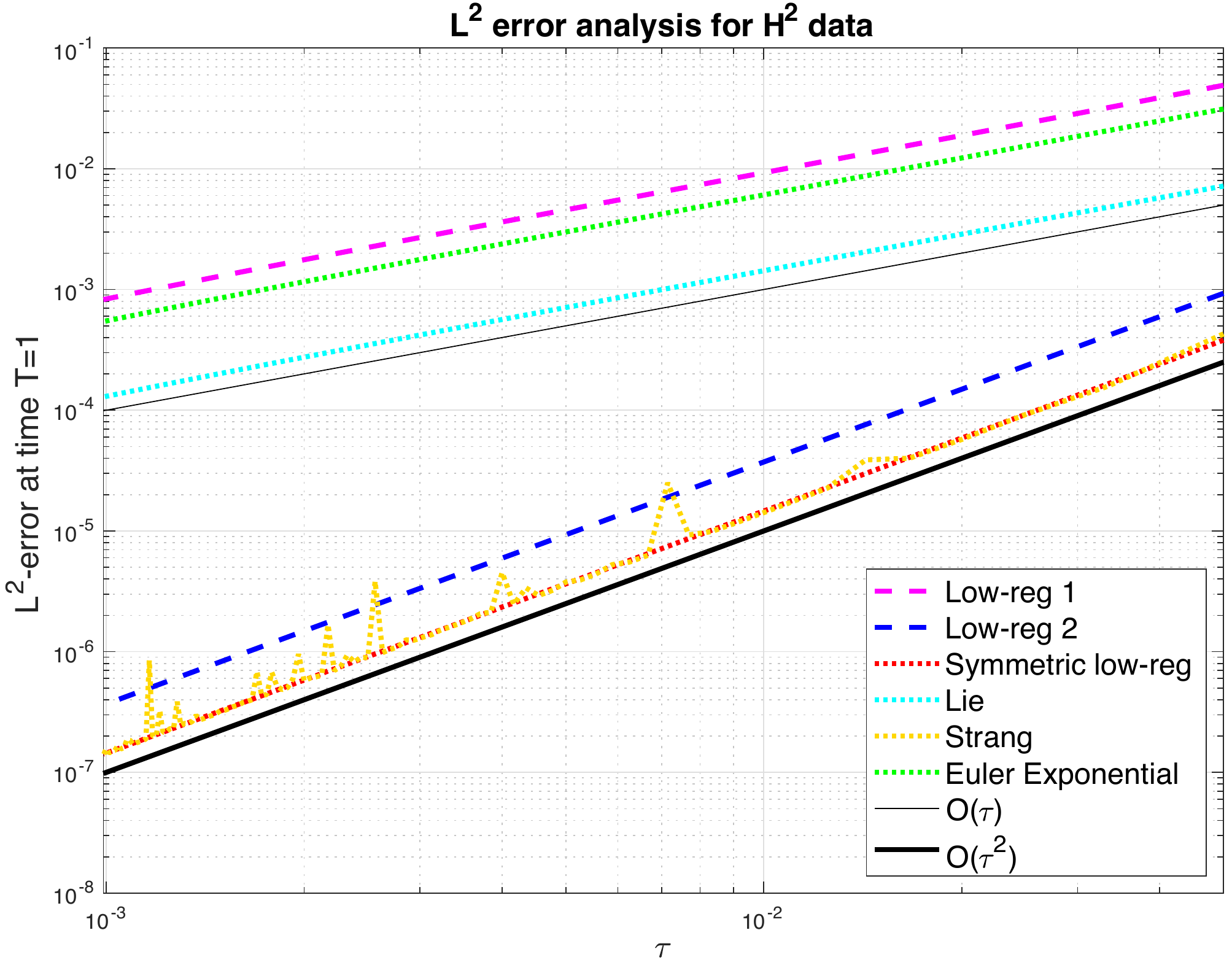}}

\caption{ 
Convergence plot for data in $H^1$ (Figure (a)) and data in $H^2$ (Figure (b) and (c)) of the asymmetric first and second-order low-regularity integrators (pink, dark blue), the symmetric method \eqref{num} (red), the classical Lie splitting, Strang splitting, and Euler Exponential method (light blue, yellow, and green). We observe order reduction of the classical Euler Exponential and Lie and Strang splitting methods (Figure (a), $H^1(\mathbb{T})$-data), and of the Strang splitting method (Figure (b), $H^2(\mathbb{T})$-data, and Figure (c), $H^2([0,1])$-data). Figures (a) and (b) are with periodic boundary conditions, while Figure (c) is with homogeneous Dirichlet boundary conditions. The slopes of the continuous black lines are one and two, respectively. We took the final time $T=1$, and the number of Fourier modes $K=2^{11}$.
}
\label{fig:cv}
\end{figure}
\begin{figure}[H]
\begin{minipage}{.5\linewidth}
\centering
\subfloat[]{\label{mass:a}\includegraphics[scale=.4]{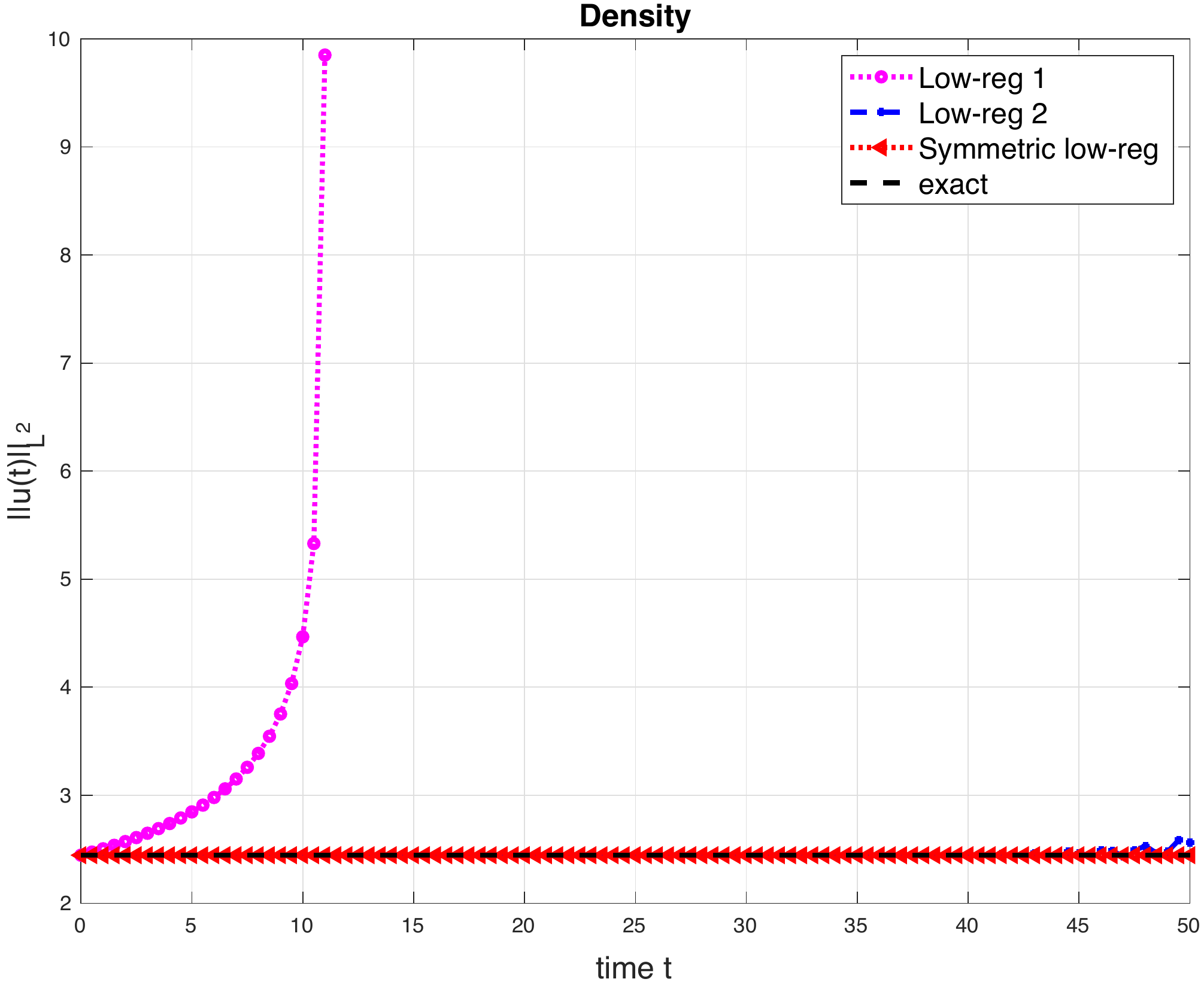}}
\end{minipage}%
\begin{minipage}{.5\linewidth}
\centering
\subfloat[]{\label{mass:b}\includegraphics[scale=.4]{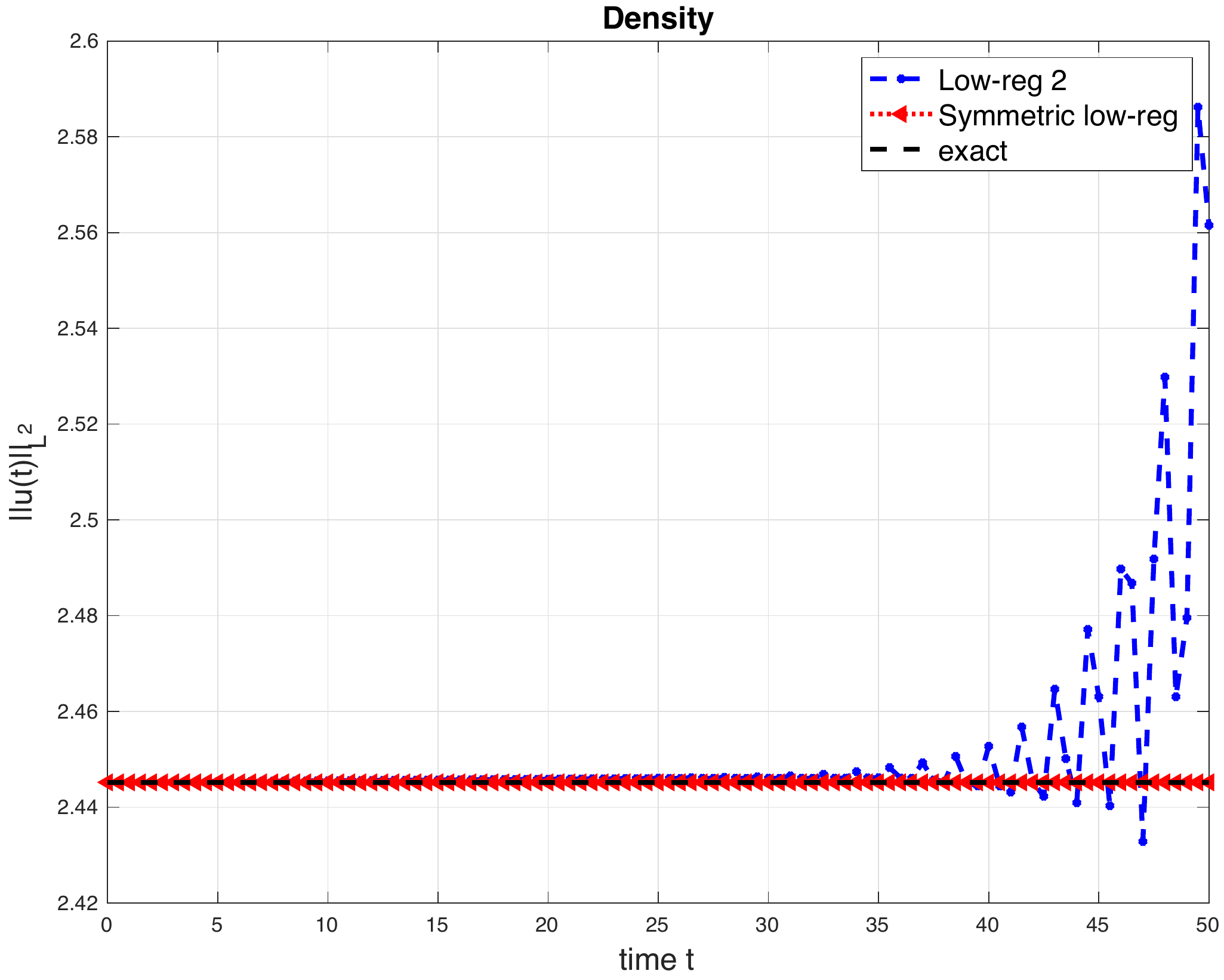}}
\end{minipage}
\caption{
Plot (a) : We graph the $L^2$-norm $\| u(t)\|_{L^2(\mathbb{T})}$ up until time $T=50$ of three low-regularity integrators. The asymmetric first and second-order low-regularity integrators (pink, blue), and the new symmetric low-regularity integrator (red). We also graph the exact value $\| u_0\|_{L^2(\mathbb{T})}$ (black).
Plot (b): We only graph the asymmetric second-order low-regularity integrators (blue), and the symmetric low-regularity integrator (red) together with the exact $L^2$-norm of the initial value (black). We fixed the number of Fourier modes $K=2^{9}$, the time step $\tau = 0.05$, and took an initial data $u_0 \in H^2$.
}
\label{fig:mass}
\end{figure}
\begin{figure}[H]
\begin{minipage}{.5\linewidth}
\centering
\subfloat[]{\label{energy:a}\includegraphics[scale=.4]{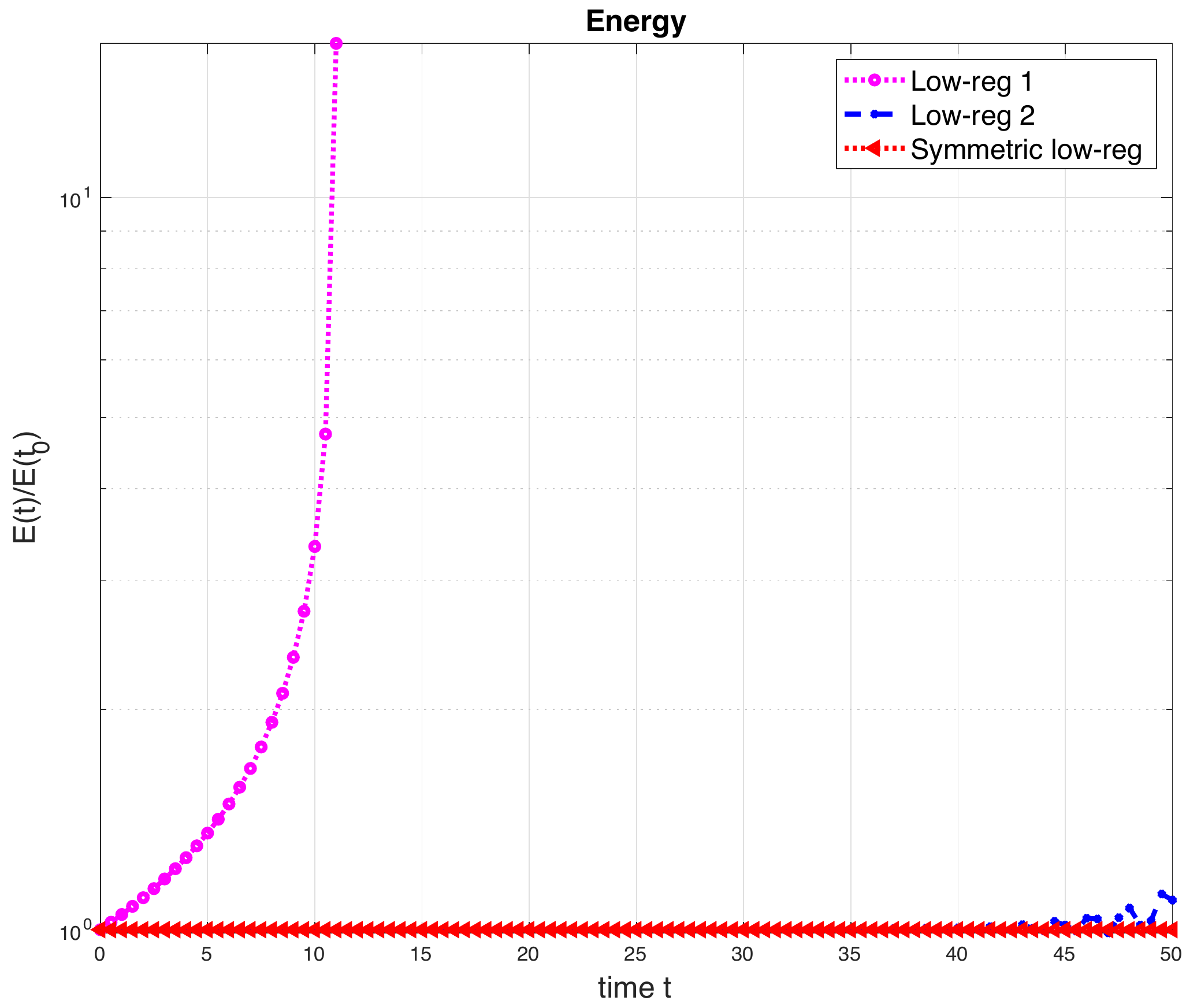}}
\end{minipage}
\begin{minipage}{.5\linewidth}
\centering
\subfloat[]{\label{energy:b}\includegraphics[scale=.4]{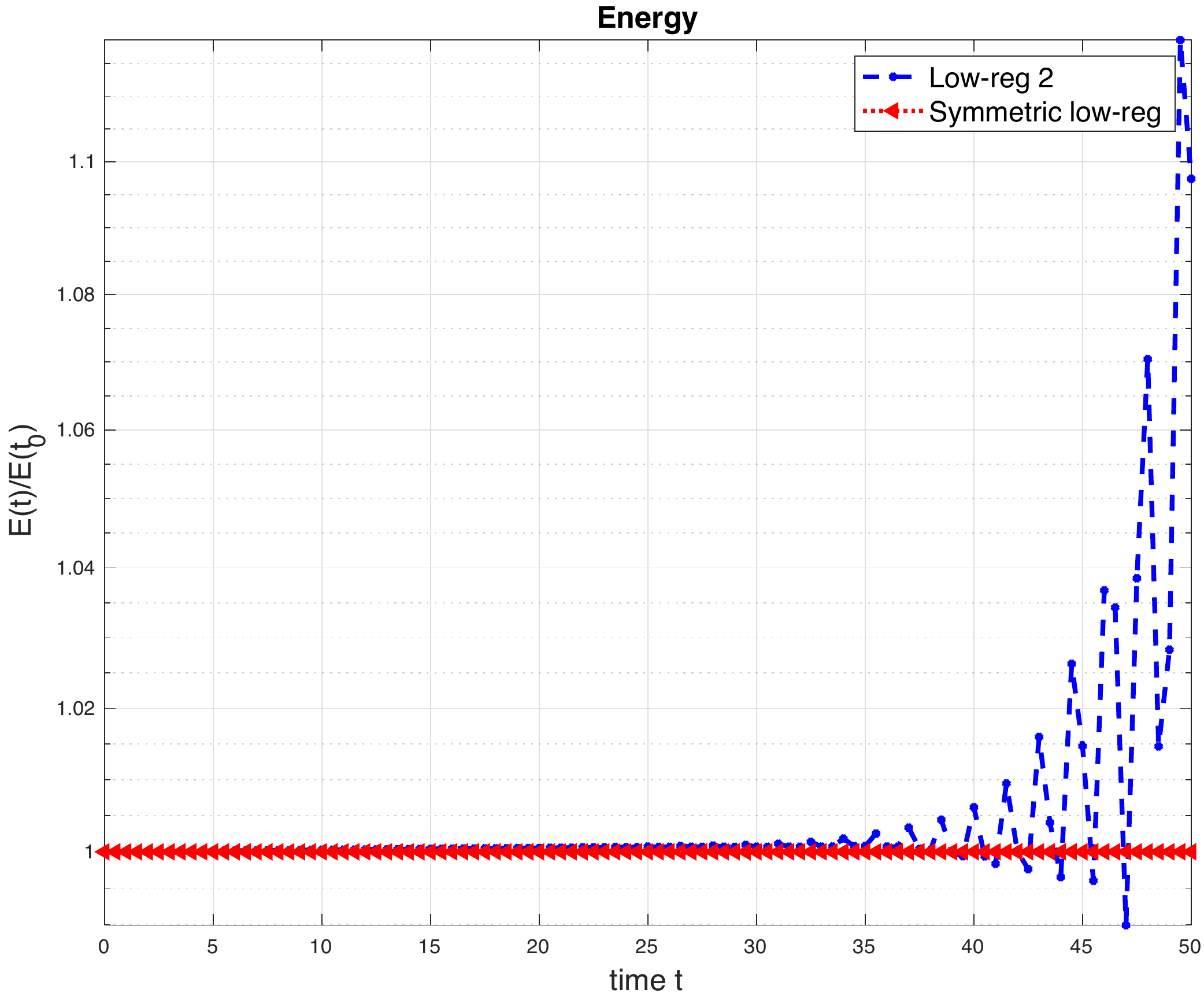}}
\end{minipage}
\caption{
Plot (a) : We graph the relative energy $E(t)/E(t_0)$ (see \eqref{conserved}) up to $T=50$ of the same three low-regularity integrators as in Figures \ref{mass:a} and \ref{mass:b}. 
Plot (b): We only graph the relative energy of the asymmetric second order low-regularity integrators (blue), and the symmetric low-regularity integrator (red). We again fixed the number of Fourier modes $K=2^{9}$, the time step $\tau = 0.05$, and took an initial data $u_0 \in H^2$.
}
\label{fig:energy}
\end{figure}
\begin{figure}[H]
\centering
\includegraphics[scale=.4]{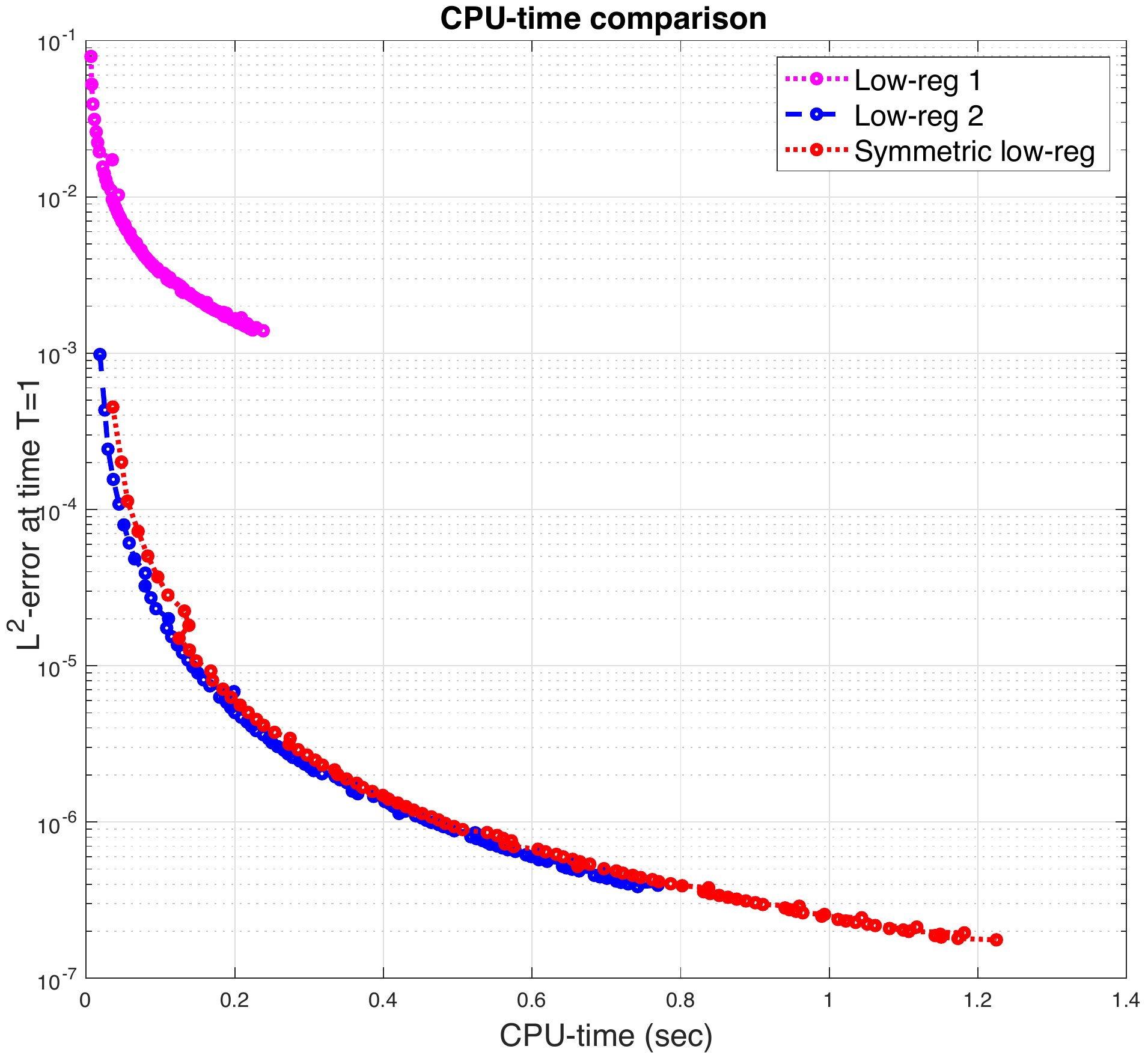}
\caption{
We plot the CPU time versus the $L^2$-error. Namely, we compare the computational cost for running the first and second-order low-regularity scheme (pink, dark blue), with the low-regularity symmetric integrator \eqref{num} (red).  
We took the same parameter values as in Figure \ref{fig:cv}. 
}
\label{CPU}
\end{figure}

This numerical study motivates the use of the scheme \eqref{num}, which conserves better the underlying geometric structure of the equation, exhibits a better error constant, and can be implemented at relatively low additional cost despite its implicit nature.
\begin{rem}[Implicit versus explicit low-regularity schemes]
We make the important remark that unlike the previous (asymmetric) low-regularity integrators \cite{OS1, BS, ORS1, ORS2, RS, YGP,ABBSBdd} the above symmetrized scheme \eqref{num} is an {\it implicit} one. We have witnessed that the implicit nature of the scheme does not adversely affect the computational cost of the method (see Figure \ref{CPU}). Nevertheless, one could query on the necessity of the implicit nature of the symmetric low-regularity scheme. In the case of second-order wave-type equations, instead of considering implicit symmetrized schemes one could study explicit three-time step symmetric schemes using Gautschi-type methods. Indeed,
for the cubic Klein-Gordon equation set on $\mathbb{T}$, an {\it explicit} symmetric {\it three time-step} low-regularity integrator could be obtained by \cite{WZ22}. While this approach is suited to second order equations, by combining the work of \cite{WZ22} together with the uniformly accurate low-regularity integrator \cite{CS_KG} adapted to the non-relativistic regime, an interesting open problem would be to obtain in the non-relativistic limit an {\it explicit} symmetric {\it three time-step} low-regularity approximation to the NLS equation \eqref{NLS}.
\end{rem}

Having motivated the scheme \eqref{num} we now provide the underlying idea behind its construction. To provide a better intuition to the reader we will work in the twisted variable and place ourselves on the torus $\Omega = \mathbb{T}$ in order to make use of Fourier-based expansions (see also \cite{OS1}).

\subsection{Derivation of the scheme}
We switch to the twisted variable $v = e^{-it\Delta}u$. We observe that $v$ satisfies
$$
i\partial_t v = e^{-it\Delta}(|e^{it\Delta}v|^2e^{it\Delta}v), \quad v_0 = u_0.
$$
Equivalently, by integrating the above and mapping Duhamel's formula in Fourier space we have
\be\label{twisted}
v(t_{n+1}) = v(t_n) -i \sum_{k=-k_1+k_2+k_3}e^{ikx}e^{it_n(k^2+k_1^2 - k_2^2-k_3^2)} I^\tau,
\ee
where the oscillatory integral is given by
\be\label{oscil}
I^{\tau} = \int_0^\tau e^{i\omega_1s}h(s)ds,
\ee
and $h(s) = e^{i\omega_2s}g(s)$, $\omega_1 = 2k_1^2$, $\omega_2 = k^2 - k_1^2 -k_2^2 -k_3^2 = -2k_1(k_2+k_3)+k_2k_3$ and 
$$g(s) = {\overline v_{k_1}(t_n+s)}v_{k_2}(t_n+s)v_{k_3}(t_n+s).$$

The central question revolves around making a suitable choice of discretization of the oscillatory integral $I^\tau$, with the aim of minimizing the regularity assumption required by this approximation.
The underlying idea behind the construction of the previous (asymmetric) low-regularity integrators (or {\it resonance-based schemes}) is to choose an approximation of the integral $I^\tau$ which allows for a practical implementation (by not performing exact integration), while optimizing the local error in the sense of regularity. Namely, by recalling that $2k_1^2$ corresponds to second order derivatives in Fourier space while the terms $k_m k_j $ (for $m \not= j$) correspond to product of first order derivatives, the idea is to separate the dominant ($\omega_1$) and lower-order ($\omega_2$) frequencies. The lower-order and non-oscillatory part $h(s)$ is then approximated by a Taylor series expansion centered at $s=0$,
$$
h(s) = h(0) + O(sw_2 g),
$$
and the dominant part $e^{i\omega_1s}$ is integrated exactly. This yields the first-order low-regularity scheme \eqref{LRexpl} with a local error of $O(\tau^2 \partial_x v)$. At low regularity this is more advantageous than classical techniques (such as exponential integrators \cite{ExpInt} or splitting methods \cite{L})
which do not embed the dominant frequency interactions into the scheme and obtain a local error of $O(\tau^2\partial_x^2 v)$. 

The key idea behind obtaining the symmetric scheme \eqref{NLS} is to make a different Taylored discretization of the lower-order and non-oscillatory part $h(s)$. Namely, we again integrate exactly and embed the dominant part $e^{i\omega_1s}$ into the numerical scheme, while this time approximating the non-dominant part in the following symmetric fashion,
\be\label{h-approx}
h(s) \approx h(0)\mathbbm{1}_{[0,\tau/2]} + h(\tau)\mathbbm{1}_{(\tau/2,\tau]}, \quad s \in [0,\tau],
\ee
where $\mathbbm{1}_A$ is the indicator function on the set $A$.
By plugging this approximation for $h$ into the oscillatory integral \eqref{oscil} yields two terms: an explicit and an implicit one. The explicit term is given by,
$$
\int_0^{\tau/2} e^{i\omega_1s}h(0)ds = \tau\frac{e^{i\omega_1\tau/2}-1}{i\omega_1\tau}\overline{v^n_{k_1}}(t_n)v^n_{k_2}(t_n)v^n_{k_3}(t_n) = \frac{\tau}{2} \varphi_1(ik_1^2\tau)\overline{v^n_{k_1}}(t_n)v^n_{k_2}(t_n)v^n_{k_3}(t_n).
$$
Using the definition of the twisted variable, equation \eqref{twisted}, and by mapping the above back to physical space yields the explicit nonlinear term $\psi_{E}^{\tau/2}(u^n)$ in the scheme \eqref{num}. Similarly, one obtains the nonlinear implicit term in \eqref{num} by using the definitions $\omega_1 = 2k_1^2$, $\omega_2 = k^2 - k_1^2 -k_2^2 -k_3^2$ and noticing that
$$
h(\tau) = e^{ik^2\tau }\left( \left(e^{-ik_2^2\tau }v^n_{k_2}(t_{n+1})\right)\left(e^{-ik_3^2\tau }v^n_{k_3}(t_{n+1})\right) \left(e^{-ik_1^2\tau }\overline{v^n_{k_1}}(t_{n+1})\right)\right)
$$
and
\begin{align*}
\int_{\tau/2}^\tau e^{i\omega_1s}h(\tau)ds =& e^{ik^2\tau }\left( \left(e^{-ik_2^2\tau }v^n_{k_2}(t_{n+1})\right)\left(e^{-ik_3^2\tau }v^n_{k_3}(t_{n+1})\right) \int_0^{\tau/2}e^{2ik_1^2(\tau-s)}ds \,
\left(e^{-ik_1^2\tau }\overline{v^n_{k_1}}(t_{n+1})\right)\right)\\
=& e^{ik^2\tau }\left( \left(e^{-ik_2^2\tau }v^n_{k_2}(t_{n+1})\right)\left(e^{-ik_3^2\tau }v^n_{k_3}(t_{n+1})\right)
\frac{\tau}{2} \varphi_1(-i\tau k_1^2) \left(e^{+ik_1^2\tau }\overline{v^n_{k_1}}(t_{n+1})\right)\right).
\end{align*}

We note that a general approach to obtain the approximation \eqref{h-approx} is to first give a symmetric approximation to the  non-oscillatory part $g(s)$ by iterating Duhamel's formula inside $v_k(t_n +s)$ in a symmetric fashion. Namely, $v_k(t_n +s)$ is approximated on $[0,\frac{\tau}{2}]$ by the linear term in the {\it Duhamel formula centered about $s=0$} (yielding the approximation $g(0)$, see \eqref{twisted}). While on $(\frac{\tau}{2},\tau]$, $v_k(t_n +s)$ is approximated
by the linear term in the {\it Duhamel formula centered about $s=\tau$}  (yielding the approximation $g(\tau)$). 
We then proceed by approximating the lower-order oscillatory part $e^{i\omega_2 s}$ in a symmetric fashion. In order to obtain higher-order symmetric low-regularity approximations, we would iterate inside $g(s)$ both of these Duhamel expansions (centered about $s=0$ on $[0,\frac{\tau}{2}]$ and about $s=\tau$ on $(\frac{\tau}{2},\tau]$) up to higher order. The construction of higher-order low-regularity symmetric integrators will be dealt with in future work.

\begin{rem}[Third order local error bound]\label{rem:H3sol}
We make an important point related to the third-order local error structure of the scheme \eqref{num}. Thanks to the symmetry of the scheme \eqref{num} we can expect to have second-order convergence under suitable regularity assumptions on the solution. From the above calculations in Fourier one easily observes that we naturally need three additional derivatives in order to obtain a third-order local error bound of the scheme \eqref{num}.
Indeed, by Taylor expanding around the midpoint one observes that the error induced by the discretization \eqref{h-approx} of $I^{\tau}$ requires the boundedness of a term of the form,
\begin{align}\label{reg}
&\int_{-\tau}^0 e^{i\omega_1(\frac{s+\tau}{2})}\left( e^{i\omega_2(\frac{s+\tau}{2})}-1\right) ds g(0) \\\nonumber
&\quad + \int_0^\tau e^{i\omega_1(\frac{s+\tau}{2})}\left( e^{i\omega_2(\frac{s+\tau}{2})}-e^{i\omega_2\tau}\right) ds g(0).
\end{align}
For the above to yield a third order term one needs to bound a term of order $O(\tau^3 \omega_1\omega_2 \bar{v}_{k_1}v_{k_2}v_{k_3})$ which corresponds in physical space to asking for three additional derivatives on $\bar{v}$. 
This is to be compared with classical (symmetric) schemes which usually have a local error of $O(\tau^3 \partial_x^4 v) $ (see for example \cite{L} for splitting schemes), and to asymmetric resonance-based schemes which merely asks for $O(\tau^3 \partial_x^2 v)$, (\cite{BS, OWY}).
\end{rem}

While we motivated this symmetric low-regularity integrator on a periodic domain, we show that it also allows for a low-regularity approximation on general smooth domains by establishing its convergence at low-regularity (see Section \ref{sec:results}).
Throughout the remainder of this article we will work on general smooth domains $\Omega\subset \mathbb{R}^d$ and make use of semi-group theory to derive our scheme and establish our convergence result on general domains (see Section \ref{sec:errAnal}).
This differs from the first structure preserving low-regularity integrators \cite{GK,VMS} which are restricted to periodic boundary conditions. 

We now enter the main bulk of this paper, which answers the question of what can be rigorously proven on the $L^2$-convergence of the scheme \eqref{num} when set on a general smooth domain.

We state and prove $L^2$-fractional convergence results, from first to second order, both on the torus $\mathbb{T}^d$ and on a smooth bounded domain $\Omega \subset \mathbb{R}^d$, under moderate regularity assumptions on the solution $u$. These are stronger convergence results than the more typical $H^\sigma(\mathbb{T}^d)$ ($\sigma>\frac{d}{2}$) -convergence analysis, which is restricted to an analysis in smooth Sobolev spaces and to periodic boundary conditions.
We state our results in the next subsection.

\subsection{Result}\label{sec:results}

\begin{thm}[$\Omega = \mathbb{T}^d$]\label{GlobalT}
Let $T>0$, $d\le 3$, and $u_0\in H^{\alpha}(\mathbb{T}^d)$ with $\alpha \in [1+\frac{d}{4}, 3]$. Let $u\in \mathcal{C}([0,T], H^{\alpha}(\mathbb{T}^d))$ be the unique solution of \eqref{NLS}.
Then there exist $\tau_{\min} > 0$ depending on $T$ and on $\|u_0 \|_{H^\alpha}$, and $C_T $ a positive function depending on $T$ and $\sup_{[0,T]}\|u(t)\|_{H^\alpha}$, such that for every time step size $\tau \le \tau_{\min}$ the numerical solution $u^n$ given in equation \eqref{num} has the following error bound:
\be\label{Errbd}
\|u(n\tau) - u^n\|_{L^2} \le C_T(\sup_{[0,T]}\|u(t)\|_{H^\alpha}) \tau^{1+\gamma}, \quad 0 \le n\tau \le T,
\ee
for $\alpha$ and $\gamma\in [0,1]$ which satisfy
\be\label{condition}
\begin{cases}
\alpha > 1 + \frac{d}{2}\ \text{and} \ 0\le \gamma \le \frac{\alpha -1}{2}, \\
\alpha = 1 + \frac{d}{2} \ \text{and} \ 0 \le \gamma < \frac{d}{4}, \\
\alpha < 1 + \frac{d}{2} \ \text{and} \ 0\le\gamma \le \alpha -1 -\frac{d}{4}.
\end{cases}
\ee

\end{thm}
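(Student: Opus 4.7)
My plan is to combine a local error analysis with a discrete $L^2$-stability argument, in the classical Lady Windermere's fan style, paying particular attention to the implicit nature of the scheme and to the fractional regularity exponents.

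First I would verify well-posedness of the implicit step \eqref{num}. Viewing it as a fixed point equation $w = G_\tau(w)$ with $G_\tau(w) := e^{i\tau\Delta}u^n + \psi_E^{\tau/2}(u^n) + \psi_I^{\tau/2}(w)$, I would use the uniform boundedness of $\varphi_1(\pm i\tau\Delta)$ on $L^\infty$ and on $H^\sigma$, together with the tame product estimate in $H^\sigma$ for $\sigma>d/2$, to show $G_\tau$ is a contraction on a closed ball around $e^{i\tau\Delta}u^n$ as soon as $\tau \lesssim \|u^n\|_{H^\sigma}^{-2}$. This gives existence, uniqueness, and a one-step $H^\sigma$ propagation bound on $u^{n+1}$.

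Second, the local error analysis. Plugging $u(t_n)$, $u(t_{n+1})$ into the right-hand side of \eqref{num} and comparing with $u(t_{n+1})$, the error reduces, in the twisted variable $v = e^{-it\Delta}u$, to the mismatch between the exact oscillatory integral \eqref{oscil} and its symmetric quadrature \eqref{h-approx}. Following Remark \ref{rem:H3sol}, a Taylor expansion of $h$ about $\tau/2$ produces the remainder \eqref{reg} with frequency symbol $\sim \tau^3 \omega_1\omega_2\, \bar v_{k_1}v_{k_2}v_{k_3}$; since $\omega_1 = 2k_1^2$ and $\omega_2 = -2k_1(k_2+k_3)+k_2k_3$, this distributes exactly three derivatives across the three factors of $v$. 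Converting back to physical space, I would bound this trilinear remainder in $L^2$ using Sobolev multiplication: the tame estimate $\|fg\|_{H^s}\lesssim \|f\|_{H^s}\|g\|_{L^\infty}+\|f\|_{L^\infty}\|g\|_{H^s}$ when $\alpha>1+d/2$ (so that the derivatives on the non-$H^\alpha$ factors can be absorbed through $H^\alpha\hookrightarrow L^\infty$), its borderline version at $\alpha = 1+d/2$ (which forces the strict inequality on $\gamma$), and a fractional Leibniz / Sobolev product $H^{s_1}\cdot H^{s_2}\hookrightarrow H^{s_3}$ when $\alpha<1+d/2$ (where $L^\infty$-control fails and the three derivatives must be split fractionally). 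The three cases of \eqref{condition} are precisely the sharpness thresholds of these three regimes, and yield the local error $\|\mathcal{L}^n\|_{L^2}\lesssim \tau^{2+\gamma}$.

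Third, $L^2$-stability. Subtracting \eqref{num} written for two sequences $u^n,\tilde u^n$ bounded in $H^\sigma$ with $\sigma>d/2$, the Lipschitz properties of $\psi_E^{\tau/2}$ and $\psi_I^{\tau/2}$ in $L^2$ (again from $H^\sigma\hookrightarrow L^\infty$ and boundedness of $\varphi_1(\pm i\tau\Delta)$) give, after absorbing the implicit piece for $\tau$ small, the contraction $\|u^{n+1}-\tilde u^{n+1}\|_{L^2}\le (1+C\tau)\|u^n-\tilde u^n\|_{L^2}$. Telescoping $u(t_{n+1})-u^{n+1}$ over the $n\le T/\tau$ steps and combining with the local bound produces \eqref{Errbd}, provided the discrete $H^\sigma$-bound on $u^n$ needed for stability is propagated. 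This last point is handled by a bootstrap: one closes the $H^\sigma$ estimate on $u^n$ (taking $\sigma=1+d/2+\varepsilon\le \alpha$, which is guaranteed by $\alpha\ge 1+d/4$ together with a slight adjustment when $d<4$) by comparing with $u(t_n)\in H^\alpha$ using the already established $L^2$-error and interpolation.

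The hardest step is the local error estimate at fractional regularity $\alpha<3$. The frequency calculation of Remark \ref{rem:H3sol} is transparent at $H^3$, but the fractional bound requires carefully allocating the three derivative powers $k_1^2(k_2+k_3)$ and $k_1 k_2 k_3$ across three factors having only $\alpha$ derivatives, which is exactly where the threshold $\alpha = 1+d/2$ appears. A secondary obstacle is making the whole argument intrinsic (semigroup-based), so that the same structure of the local error persists on the bounded domain $\Omega\subset\mathbb{R}^d$ treated in the companion result.
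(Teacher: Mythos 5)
Your overall architecture (fixed-point solvability of the implicit step, local error plus $L^2$-stability, Lady Windermere's fan with a bootstrapped $H^\sigma$ bound on the iterates) matches the paper's. But there is a concrete gap in the bootstrap: you propose to propagate the numerical solution in $H^\sigma$ with $\sigma = 1+\tfrac d2+\varepsilon$ and assert $\sigma\le\alpha$ ``guaranteed by $\alpha\ge 1+\tfrac d4$.'' This is false: $1+\tfrac d4<1+\tfrac d2$ for every $d\ge 1$, so for the entire low-regularity range $1+\tfrac d4\le\alpha\le 1+\tfrac d2$ the exact solution itself need not lie in your $H^\sigma$, the interpolation between the $L^2$-error and the $H^\alpha$-bound degenerates, and the stability constants cannot be closed. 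The stability estimate only needs $H^\sigma\hookrightarrow L^\infty$, i.e.\ $\sigma>\tfrac d2$; the argument must be run with $\sigma=\tfrac d2+\varepsilon$, which satisfies $\sigma<1+\tfrac d4\le\alpha$ precisely because $d\le 3$. This is exactly the choice made in the paper, and with your choice the theorem's most interesting regime ($\alpha<1+\tfrac d2$, third line of the hypothesis) is out of reach.

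On the local error, you take a genuinely different route: a Fourier-side estimate of the quadrature defect of the oscillatory integral, expanded about the midpoint, whereas the paper works intrinsically with the filtered function $\mathcal{N}$ and the commutator $\mathcal{C}[f,i\Delta](v,\bar v)=-2(|\nabla v|^2\bar v+2v\nabla v\cdot\nabla\bar v)$, precisely so that the same proof covers the bounded domain. On $\mathbb{T}^d$ your route can work, and the midpoint expansion does automatically encode the symmetric cancellation of the $O(\tau^2)$ terms, but two points are underdeveloped. First, a genuine Taylor expansion of $h(s)=e^{i\omega_2 s}g(s)$ produces integer-power remainders such as $\tau^3\omega_2^2$ and $\tau^3\omega_1\omega_2$, which for $\alpha<3$ are not controlled by $H^\alpha$; one must use fractional remainders of the type $|e^{ix}-1|\le 2^{1-\gamma}|x|^\gamma$ throughout, extracting only $\tau^{2+\gamma}$ against $\omega_1^\gamma\omega_2$, $\omega_2^{1+\gamma}$, etc. Second, for $\gamma<\tfrac d4$ the space $H^{2\gamma}$ is not an algebra, and the trilinear remainder cannot be closed by a tame estimate or a single fractional Leibniz rule; one needs the asymmetric bilinear estimates \eqref{estim1}--\eqref{estim3} that balance $\tfrac d4+\gamma$ derivatives on each factor. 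Your three-case discussion gestures at this, but as written (``absorbed through $H^\alpha\hookrightarrow L^\infty$'') it conflates the threshold $\alpha>\tfrac d2$ (which always holds here) with the operative threshold $2\gamma>\tfrac d2$, i.e.\ whether $H^{\alpha-1}$ is an algebra, which is what actually generates the case distinction in the statement.
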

We now consider the case where $\Omega$ is a smooth bounded domain. Given that in this case the space $X^s$ (see Section \ref{sec:notation}) in which the solution belongs depends not only on Sobolev regularity but also on compatibility conditions which the solution must satisfy on the boundary, we divide the statement of our results depending on the compatibility conditions imposed on $u|_{\partial \Omega}$, (and on the order of convergence).

\begin{thm}[$\Omega \subset \mathbb{R}^d$ smooth bounded domain]\label{GlobalOmega} Let $\Omega$ be a smooth bounded domain of $\mathbb{R}^d$. We consider the NLS equation \eqref{NLS}, equipped with homogeneous Dirichlet boundary conditions. Given any $T>0$, and $d\le 3$, there exists $\tau_{min}> 0 $ depending on $T$ and the norm of the initial data such that we have the following:
\begin{enumerate}
\item  Given any $u_0\in (H^{1+d/4}\cap H^1_0)(\Omega)$ we have first-order convergence of the symmetric scheme \eqref{num},
$$
\|u^n-u(n\tau)\|_{L^2} \le C_T \tau,
$$
for all $\tau\le \tau_{\min}$, and $0 \le n\tau \le T$.
\item More generally, given any $u_0 \in (H^{\alpha}\cap H^1_0)(\Omega)$ with $\alpha\in [1+\frac{d}{4},2]$ we have the fractional convergence estimates \eqref{Errbd} for $\alpha$ and $\gamma$ which satisfies \eqref{condition}. In particular, we have
$$
\|u(n\tau) - u^n\|_{L^2} \le C_T 
\begin{cases}
\tau^{1+ \frac{\alpha -1}{2}} \ \text{if} \ 1 + \frac{d}{2} < \alpha \le 2, \\
\tau^{1+ \frac{d}{4} - \epsilon} \ \text{for}\ \alpha = 1 + \frac{d}{2}, \\
\tau^{ \alpha -d/4}, \ \text{for} \ 1+ \frac{d}{4} \le \alpha < 1 + \frac{d}{2},
\end{cases}
$$
for $0 \le n\tau \le T$, $\tau \le \tau_{min}$, and for any $\epsilon > 0$.

\item By allowing for more compatibility condition on the boundary we have the following second-order convergence result for an initial data $u_0 \in X^3 = \{ u \in H^3(\Omega): u|_{\partial \Omega} = 0, \Delta u|_{\partial \Omega}=0 \ \text{in} \ L^2(\partial \Omega)\}$,
$$
\|u^n-u(n\tau)\|_{L^2} \le C_T \tau^2,
$$
{for all $\tau\le \tau_{\min}$, and $0 \le n\tau \le T$.}
\end{enumerate}

\end{thm}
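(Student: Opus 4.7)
The plan is to prove the three parts of the theorem by the now-standard three-step analysis for low-regularity integrators: well-posedness of the implicit step, a local error bound, and a stability/propagation argument combined through a Lady Windermere's fan telescoping. The heart of the work is part (3), the second-order rate at $X^3$ regularity; parts (1) and (2) will then be recovered by interpolating the $X^3\!\to\!O(\tau^3)$ local bound against a simpler $H^{1+d/4}\!\to\!O(\tau^2)$ local bound, yielding the fractional thresholds in \eqref{condition}.

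For the implicit step I would rewrite one iteration of \eqref{num} as a fixed-point equation $u^{n+1}=T(u^{n+1})$ with $T(w)=e^{i\tau\Delta}u^{n}+\psi_{E}^{\tau/2}(u^n)+\psi_{I}^{\tau/2}(w)$, and run a Banach fixed-point argument in a small closed ball of $H^{\alpha}\cap H^{1}_{0}$ (respectively $X^{3}$ for part~(3)). The $L^{2}$-isometry of $e^{it\Delta}$, the uniform operator-norm bound on $\varphi_{1}(\pm i\tau\Delta)$ inherited from the functional calculus of the Dirichlet Laplacian, and the Sobolev algebra property at $\alpha>d/2$ make $T$ a strict contraction for $\tau\le\tau_{\min}$, producing a unique $u^{n+1}$ of the same regularity as $u^{n}$.

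For the local error I would avoid Fourier series entirely and work directly with Duhamel's formula in the twisted variable $v=e^{-it\Delta}u$. On each half-step I would substitute the Duhamel expansion of $u(t_{n}+s)$ centred at $s=0$ on $[0,\tau/2]$ and at $s=\tau$ on $(\tau/2,\tau]$, the semigroup analogue of the symmetric ansatz \eqref{h-approx}. Tracking the residual as in \eqref{reg}, the leading remainder takes (schematically) the form of cubic expressions in $u$, $\nabla u$ and $\Delta u$ multiplied by $\tau^{3}$, which after using \eqref{NLS} to convert time derivatives into $i\Delta u-i|u|^{2}u$ can be controlled in $L^{2}$ by $C\tau^{3}\|u\|_{X^{3}}^{3}$. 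Here the Dirichlet compatibility conditions $u|_{\partial\Omega}=\Delta u|_{\partial\Omega}=0$ are precisely what is needed for the functional calculus to treat $\Delta$ and $\varphi_{1}(\pm i\tau\Delta)$ as bounded operations on $H^{3}(\Omega)$; a parallel, simpler computation at $\alpha=1+d/4$ yields the basic $O(\tau^{2})$ local bound used for parts~(1) and~(2).

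For stability I would subtract the schemes produced from two neighbouring states, apply the $L^{2}$-isometry of $e^{i\tau\Delta}$, bound the differences $\psi_{E}^{\tau/2}(u^{n})-\psi_{E}^{\tau/2}(U^{n})$ and their implicit analogues using the embedding $H^{1+d/4}\hookrightarrow L^{\infty}$ valid for $d\le 3$, and absorb the implicit contribution for $\tau$ small. A discrete Gronwall estimate then telescopes the local errors into the global $L^{2}$ bound, giving part~(3); real interpolation between the $X^{3}$ and $H^{1+d/4}\cap H^{1}_{0}$ endpoints delivers parts~(1)--(2). In my view the chief obstacle is verifying that every operation inside the local-error expansion---applying $\Delta$, multiplying by $u$ or $\bar u$, then applying $\varphi_{1}(\pm i\tau\Delta)$---respects simultaneously the Sobolev regularity and the boundary compatibility on $\partial\Omega$. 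It is exactly this interplay that dictates the definition of $X^{3}$ in part~(3) and that is responsible for the thresholds $1+d/4$ and $1+d/2$ appearing in \eqref{condition}.
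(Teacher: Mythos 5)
Your overall architecture (fixed-point solvability of the implicit step, local error, stability, Lady Windermere's fan) matches the paper's, and your treatment of the implicit step and of the symmetric Duhamel ansatz centred at $s=0$ on $[0,\tau/2]$ and at $s=\tau$ on $(\tau/2,\tau]$ is exactly the paper's route (see the alternative decomposition \eqref{otherDecomp}). The genuine gap is in how you propose to obtain part (2), the fractional rates: you claim they follow by ``real interpolation between the $X^{3}$ and $H^{1+d/4}\cap H^{1}_{0}$ endpoints'' of the two local error bounds. First, the local error $u\mapsto u(t_{n}+\tau)-\varphi^{\tau}(u(t_{n}))$ is a nonlinear functional of the solution, and interpolation theory does not apply to it as it would to a linear or multilinear operator; you would at minimum need a multilinear reduction that you do not supply. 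Second, and more decisively, even if such an interpolation were available it would produce the wrong thresholds: interpolating linearly between $(\alpha,\gamma)=(1+\tfrac{d}{4},0)$ and $(3,1)$ gives $\alpha=1+\tfrac{d}{4}+\gamma\bigl(2-\tfrac{d}{4}\bigr)$, which is strictly larger than the regularity $\alpha=\gamma+1+\tfrac{d}{4}$ (for $\gamma<\tfrac{d}{4}$) and $\alpha=2\gamma+1$ (for $\gamma>\tfrac{d}{4}$) required by \eqref{condition}, for every $\gamma\in(0,1)$ and $d\le 3$. Concretely, for $d=1$ and $\alpha=2$ the theorem asserts the rate $\tau^{3/2}$, while your interpolation would only reach roughly $\tau^{1.43}$. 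The piecewise-linear shape of \eqref{condition}, with its break at $\gamma=d/4$, cannot be recovered from two endpoints.

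The paper avoids this by proving the fractional local bound $O(\tau^{2+\gamma})$ \emph{directly} for each $\gamma$: the gain $\tau^{\gamma}$ is extracted from the operator estimate $\|(e^{it\Delta}-1)(-t\Delta)^{-\gamma}u\|\le 2^{1-\gamma}\|u\|$ applied to the commutator terms $\mathcal{C}[f,i\Delta]$, at the cost of $2\gamma$ extra derivatives, and the regularity cost is then distributed optimally using the bilinear estimates \eqref{estim1}--\eqref{estim3} in the low-order spaces $H^{2\gamma}$ (this is precisely where the two regimes $\gamma\gtrless d/4$ and the thresholds $1+\tfrac{d}{4}$, $1+\tfrac{d}{2}$ arise). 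Interpolation does appear in the paper, but only in the legitimate form of an interpolation inequality for the norms of the fixed remainder $\mathcal{R}(\tau,t_{n})$ between $L^{2}$ and $X^{2\gamma+1}$ (resp. $X^{\gamma+1+d/4}$), used to propagate a uniform $X^{\sigma}$ bound, $\sigma>d/2$, on the numerical solution --- a step your stability paragraph also glosses over but which is needed before the discrete Gronwall argument can close.
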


We start by making a few remarks on Theorem \ref{GlobalT}, set on the torus $\mathbb{T}^d$. 
Let us first mention that the symmetric low-regularity integrator \eqref{num} requires less regularity assumptions than classical symmetric schemes (see \cite{fracSplit, L, Besse, Peterseim}). Indeed, for example the authors \cite{fracSplit} 
require $H^2$ solutions to obtain first order convergence of a Lie splitting scheme for NLS, and the author \cite{L} requires $H^4$-solutions for second-order convergence of a Strang splitting method, whereas we require $H^{1+\frac{d}{4}}$ and $H^3$ to obtain first and resp. second order convergence.

We compare this result to previous convergence results of explicit low-regularity integrators for the NLS equation \eqref{NLS}, which are not symmetric and hence do not have good structure preservation properties (see Figures \ref{fig:mass},~\ref{fig:energy}). 
To the best of our knowledge, this is the first fractional convergence results of a low-regularity schemes to be obtained from first to second order. 
We compare our full first and second-order convergence result with the work of \cite{YGP}, which also obtains first order convergence in $L^2(\mathbb{T}^d)$ for solutions $u(t) \in H^{1+d/4}(\mathbb{T}^d)$.
For the second order convergence in $L^2$ of their asymmetric second-order low-regularity integrator the author \cite{YGP} asks for solutions $u(t)\in H^{2+d/4}$, whereas the symmetric low-regularity integrator \eqref{num} requires a bit more regularity, namely $H^3$ solutions. 
Moreover, convergence of order 
$\tau^{1+\gamma} $ in $H^r$-norm, $r>d/2$, for $u_0 \in H^{2\gamma + r + 1}(\mathbb{T}^d)$ easily follows from the proof of Theorem \ref{GlobalT}. This is to be compared with asymmetric resonance-based schemes which would typically ask for $u_0 \in H^{\gamma + r + 1}(\mathbb{T}^d)$, $r > \frac{d}{2}$. See \cite{OS1, OWY} for a first and resp. second order analysis. We refer to Remark~\ref{rem:H3sol} which discusses the necessity of requiring three additional derivatives on the solution to obtain second-order convergence of the scheme \eqref{num}.
Finally, we note that the above analysis works analogously when adding a potential term $uV$ to equation \eqref{NLS}. One would need to require the same regularity assumption (and boundary conditions) on $V$ as is required on $u$ in the above theorem. This follows exactly as done in \cite{YGP}. 

We finish by comparing our result with the work of \cite{VMS}, which introduced a symmetrized low-regularity integrator for the Schr\"odinger map (SM), where they relate the SM flow to the NLS equation set on 1-d torus $\mathbb{T}$ via the Hasimoto transform. 
The analysis of their scheme is however restricted to the 1-d torus, and to first order convergence in smooth Sobolev spaces $H^r(\mathbb{T})$, $r>1/2$.
The results we present here go beyond the more typical $H^r(\mathbb{T})$ error analysis ($r>\frac{1}{2}$), by pushing down the error analysis to $L^2$ for first and up to second order convergence. Furthermore, we do not restrict ourselves to Fourier-based techniques, and hence to periodic boundary conditions, as is testified by Theorem \ref{GlobalOmega}.  Using the techniques presented in this article, one can also obtain a symmetric low-regularity approximation to the Schr\"odinger map in a more general setting than \cite{VMS}.

We now comment upon Theorem \ref{GlobalOmega}. To our knowledge, this is the first convergence result which goes beyond the first-order convergence analysis of a low-regularity integrator when set on a smooth bounded domain $\Omega \subset \mathbb{R}^d$. 
We refer to \cite[Corollary 20]{RS} where the authors show first order convergence in $L^2(\Omega)$ of the asymmetric low-regularity scheme \eqref{LRexpl}
while analogously asking for $(H^{1+\frac{d}{4}} \cap H^1_0)(\Omega)$ solutions.

We also compare our result 
to the work of \cite{Peterseim} which introduces a
mass and energy conserving variant of the Crank-Nicolson method as its time-discretization. They show first order convergence on a smooth bounded domain $\Omega \subset \mathbb{R}^d$ under -among other assumptions- $u_{t} \in L^2(0,T;H^2(\Omega))$, and obtain second order convergence under -among other assumptions- for $u_{tt} \in L^2(0,T;H^2(\Omega))$, while assuming $u \in C([0,T], H^2(\Omega))$ throughout their analysis. 
In contrast to the above classical results Theorem \ref{GlobalOmega} permits less regularity assumptions on $u(t)$, namely less than $H^2$-solutions for first order, and less than $H^4$-solutions for second order.

We also mention that for the 1-d NLS equation with Neumann boundary conditions a low-regularity integrator has been introduced by \cite{lowNeumann}, where using harmonic analysis techniques they could prove up to almost first order convergence with $H^1$-data.

\subsection{Outline of the paper}
In Section \ref{sec:notation} we set the scene and introduce the spaces and norms, together with crucial nonlinear estimates, which we will work with throughout the error analysis section. In Section \ref{sec:impl} we analyze the implicit nature of the scheme; we show that it is well-defined and establish a crucial a priori estimate on the numerical solution. Finally, in Section \ref{sec:errAnal} we prove the fractional global error estimates presented in Theorems \ref{GlobalT} and \ref{GlobalOmega}. First, in Section \ref{sec:local} the fractional local error bounds are obtained, followed by Section \ref{sec:stab} where the stability estimate is shown, and from which the convergence results then naturally follow.

\section{Norms, spaces, and nonlinear estimates}\label{sec:notation}
The norm and space used during the error analysis will depend on the domain $\Omega$ and boundary conditions imposed. We will treat the case where $\Omega = \mathbb{T}^d$ with periodic boundary conditions, and the case of homogeneous Dirichlet boundary conditions when placed on a smooth bounded domain of $\mathbb{R}^d$. In the case where $\Omega = \mathbb{T}^d$ the domain of the operator $\mathcal{L} = -\Delta$ is $D(\mathcal{L}) = H^2(\mathbb{T}^d)$, whereas for Dirichlet boundary conditions we have that $D(\mathcal{L}) = (H^2\cap H^1_0)(\Omega)$.
 We can define powers of $\mathcal{L}$, $\mathcal{L}^s$, for $s \ge 0$ using the spectral resolution, 
 and define the space $X^s(\Omega) = \mathcal{D}(\mathcal{L}^{s/2})$ as the domain of the operator $\mathcal{L}^{s/2}$, where $X^0(\Omega) = L^2(\Omega)$. We define the norm on $X^s(\Omega)$ by the usual graph norm
$$
\|u\|_{s}^2 = \|u\|^2 + \|\mathcal{L}^{s/2}u \|^2, \quad s \ge 0,
$$
where $\|u\| = \|u\|_{L^2}$ is the $L^2(\Omega)$-norm. We will be interested in characterizing the space $X^s(\Omega)$ depending on the domain $\Omega$ at study.

\subsection{The case of periodic boundary conditions} 
In the case of periodic boundary conditions we have that
$$X^{s}(\mathbb{T}^d) = H^{s}(\mathbb{T}^d) := \left\{u = \sum_{k \in \mathbb{Z}^d}u_k \frac{e^{ikx}}{\sqrt{ (2\pi)^d}} \in L^2(\mathbb{T}^d): |u|_s^2 \triangleq \sum_{k \in \mathbb{Z}^d} |k|^{2 s}|u_k|^2 < \infty \right\}$$
with equivalence of norms
%
$$
\|u\|_{s}^2 = \|u\|_{L^2(\mathbb{T}^d)}^2 + \|(-\Delta)^{s/2}u\|_{L^2(\mathbb{T}^d)}^2
= \sum_{k \in \mathbb{Z}^d} (1+ |k|^{2s})|u_k|^2
= \|u\|_{H^s}^2,
$$
%
where $\displaystyle{ u_k = \frac{1}{\sqrt{ (2\pi)^d}}\int_{\mathbb{T}^d} u e^{-ikx}dx} $.

\subsection{The case of Dirichlet boundary conditions}

We will be interested in characterizing the domain $X^s(\Omega)$ for $s \in [0,2] \cup \mathbb{N}$ (see Theorem \ref{GlobalOmega}). 

In the case where $s = m \in \mathbb{N}$ we have the following characterization (see \cite[Lemma 3.1]{VT})
$$
X^m = \{u \in H^m(\Omega): \Delta^j u = 0 \ \text{in} \ L^2(\partial \Omega) \ \text{for} \ j < m/2 \},
$$
with equivalence of the norms on $H^m(\Omega)$ and $X^m$ for functions in $X^m$. 

To treat the case where $s$ is not an integer we first introduce the following fractional Sobolev-type spaces known as the {\it Sobolev-Slobodetskij}, {\it Gagliardo} or {\it Aronszajn} space. Given any $s>0$ of the form $s = m+\sigma$, with $m\in\mathbb{N}$ and $\sigma \in (0,1)$, we define
$$
H^s(\Omega) = \{u\in H^m(\Omega): D^\alpha u \in H^\sigma(\Omega) \ \text{for any} \ \alpha \ \text{s.t.}\ |\alpha|=m \},$$
endowed with the norm
$$
 \|u\|_{H^s}^2 = \sum_{|\alpha|=0}^{m} \|D^\alpha u\|^2 + \sum_{|\alpha|=m} |D^\alpha u|_{H^{\sigma}}^2.
$$
For $s=m$ an integer the space $H^s(\Omega)$ coincides with the usual Sobolev space $H^m(\Omega)$, and for $\sigma \in (0,1)$ we have
$$
H^\sigma(\Omega) = \left\{u\in L^2(\Omega): |u|_{H^\sigma}^2 := \int_{\Omega} \int_{\Omega} \frac{|u(x) - u(y)|^2}{|x-y|^{^{d+2\sigma}}}dxdy < \infty \right\}.
$$
We note that all of the fractional Sobolev spaces which we introduce here can also be defined by using interpolation theory. Indeed, the above space is an intermediary Banach space between $L^2(\Omega)$ and $H^1(\Omega)$, and can be defined by interpolation as
$$
H^\sigma(\Omega) = [L^2(\Omega), H^1(\Omega)]_{\sigma},
$$
see \cite[Appendix 1]{Vaz} and \cite{atsuchi}.
Finally, for $s \in (1/2, 2]$ we define
$$
H^s_D(\Omega) = \{u\in H^s(\Omega): u|_{\partial \Omega}=0 \ \textit{in} \ L^2(\partial\Omega) \},
$$
it follows from the above that $D(\mathcal{L}) = H^2_D(\Omega)$. We can now express $X^s$ in terms of Sobolev spaces for $s \in [0,2]\backslash{\frac{1}{2}}$ (see \cite[Theorem 16.12]{atsuchi}),
$$
X^s(\Omega) = 
\begin{cases}
H^s(\Omega) \quad \text{if} \ 0\le s < \frac{1}{2}\\
H^s_D(\Omega) \quad \text{if} \ \frac{1}{2} < s \le 2
\end{cases},
$$
with norm equivalence
\be\label{equivnorms}
C^{-1} ||u||_{H^s} \le \|u\|_{s} \le C||u||_{H^s}, \quad u \in X^s,
\ee
for some constant $C>0$.
In the special case where $s=1/2$ we have that $X^{1/2} = H^{1/2}_{00}$ is the intermediate space defined by
$$
H^{1/2}_{00}(\Omega) := \{u\in H^{1/2}(\Omega): |u|_{H^{1/2}_{00}}^2 := \int_{\Omega}
 \frac{u^2(x)}{\text{dist}(x,\partial \Omega)}dx < \infty \},
$$
with equivalence of norms on $X^{1/2}$ as in \eqref{equivnorms}, see \cite[Prop 2.2]{Antil}.

\subsubsection{Bilinear and nonlinear estimates}

In this section we introduce bilinear estimates that are fundamental for the global error analysis, which we now motivate. 
The results we present in this article go beyond the more typical $H^s$ error analysis ($s>\frac{d}{2}$), by pushing down the analysis to $L^2$ and obtaining fractional rates of convergence, from first up to second order. In particular, to obtain these fractional rates when $\gamma < d/4$, we need to work in the low-order Sobolev spaces $H^{2\gamma}$ (see Section \ref{sec:errAnal}). In order to obtain sharp low-regularity error estimates in theses spaces we call upon three bilinear estimates (see equations \eqref{estim1}, \eqref{estim2}, and \eqref{estim3} below) which are taylored to require the least regularity assumptions on $u$ when bounding the local error terms (see also Remark \ref{rem:bilin}).

Let $\gamma \ge 0$ and $\epsilon > 0$.
Throughout the error analysis we will use the following bilinear estimates, depending on the values of $\gamma$. 
In the regime $\gamma > d/4$ we call upon the classical bilinear estimate
\be\label{bilinsmooth}
\|uv\|_{H^{2\gamma}} \lesssim 
\|u\|_{H^{2\gamma}} \| v\|_{H^{2\gamma}}, \ \text{for} \ \gamma > \frac{d}{4},
\ee
whereas in the regime $\gamma \in [0,d/4)$ we exploit the following three bilinear estimates,
\be\label{estim1}
\|uv\| \lesssim 
\|u\|_{H^{\frac{d}{4} + \gamma}} \| v\|_{H^{\frac{d}{4}-\gamma}} \ \text{for} \ 0\le \gamma < \frac{d}{4},
\ee
\be\label{estim2}
\|uv\|_{H^{2\gamma}} \lesssim 
\|u\|_{H^{\frac{d}{4} + \gamma}} \| v\|_{H^{\frac{d}{4}+\gamma}} \ \text{for} \ 0\le \gamma < \frac{d}{4}, 
\ee
and
\be\label{estim3}
\|uv\|_{H^{2\gamma}} \lesssim 
\|u\|_{H^{\frac{d}{2} + \epsilon}} \| v\|_{H^{2\gamma}} \ \text{for} \ 0\le \gamma \le \frac{d}{4},
\ee
for any $\epsilon > 0 $.
The above estimates are particular cases of \cite[Theorem 8.3.1]{hormander}, valid either on $\mathbb{R}^d$ or $\mathbb{T}^d$. Furthermore, 
for a smooth bounded domain $\Omega \subset \mathbb{R}^d$, Stein's extension theorem (\cite[p.154]{adams}) guarantees the existence of a total extension operator, bounded both from $L^2(\Omega)$ to $L^2(\mathbb{R}^d)$ and from $H^m(\Omega)$ to $H^m(\mathbb{R}^d)$, for any $m\in \mathbb{N}$. By interpolation, this operator is bounded from $H^s(\Omega)$ to $H^s(\mathbb{R}^d)$ for any $s\le m$ (see \cite[p. 208]{adams}). The estimates \eqref{bilinsmooth}, \eqref{estim1}, \eqref{estim2} and \eqref{estim3} consequently hold on $\Omega$ by extending $u$ and $v$ to $\mathbb{R}^d$, applying the estimates on their extensions, and restricting their product to $\Omega$.

\begin{rem}[Bilinear estimates in low-order Sobolev spaces $H^{2\gamma}$, $\gamma < d/4$]\label{rem:bilin}
A natural bilinear estimate which is essential for  an analysis in the spaces $H^{2\gamma}$, $\gamma < d/4$, is the estimate \eqref{estim3}. This estimate is an analogue of the estimate \eqref{bilinsmooth} in the smooth case where $\gamma > d/4$. These two estimates allow to start and fall back on the same space $H^{2\gamma}$. 
However, the estimate \eqref{estim3} requires more regularity on $u$ than on $v$ ($\frac{d}{2} + \epsilon > 2\gamma$, for $\gamma < d/4$), and asks for $H^{\frac{d}{2}+\epsilon}$-regularity on the solution. One can obtain more optimal bounds which require less regularity assumptions by equally distributing the regularity on $u$ and on $v$. Indeed, by assuming that $u$ and $v$ have the same regularity, applying the estimate \eqref{estim2} requires $\frac{d}{4} + \gamma$ additional derivatives, which is better than $\frac{d}{2}+\epsilon$ for $\gamma < d/4$. While the estimate \eqref{estim1} is used when $v$ requires $2\gamma$ derivatives more than $u$, and balances the regularity requirement to again ask for $\frac{d}{4} + \gamma$ on both $u$ and $v$ (see Proposition \ref{prop:R1bd}).
\end{rem}

We now consider the nonlinearity, which we denote by 
\be\label{nonlin}
f(u,\bar{u})(t,x) = -iu^2(t,x)\bar{u}(t,x).
\ee
One can easily deduce from the inequalities \eqref{bilinsmooth} and \eqref{estim3} together with the equivalence of norms on $X^s$ the following estimates on the nonlinearity \eqref{nonlin}
\be\label{A2.2}
\begin{array}{c}
\|f(w,\bar{w})\| _{s} 
\le c_{s,\sigma} \|w\|_{\sigma}^2\|w\|_{s}  \le C_{s,\sigma}(\|w\|_{{\sigma}}) \|w\|_{s} \\\\
\|f(v, \bar{v}) - f(w,\bar{w})\|_{s} 
\le c_{s,\sigma}\|v-w \|_{s}\sum_{k=0}^2\|v \|^k_{\sigma} \| w\|^{2-k}_{\sigma}
 \le C_{s,\sigma}(\|v\|_{{\sigma}}, \|w\|_{{\sigma}})\|v-w \|_{s}
\end{array},
\ee
where $\sigma = \frac{d}{2} + \epsilon$, $c_{s,\sigma} > 0$, and $C_{s,\sigma}(\|u\|,\|v\|)$ denotes a generic constant which depends continuously on the bounded arguments $\|u\|$ and $\|v\|$. In the regime $s > \frac{d}{2}$ the above holds with $\sigma = s$.

\begin{rem}[An analysis for very rough solutions]
The main ingredient throughout the error analysis section of this article rests upon the crucial bilinear estimates given above, and restricts the solution to belong to the Sobolev space $H^s$, $s>d/2$. In order to consider very rough solutions $u \in H^s$, $s\le d/2$ one needs to call upon more refined tools such as discrete Bourgain spaces when working on the torus (\cite{ORS2}), and discrete Strichartz estimates when working on the full space (\cite{ORS1}). This delicate error-analysis is out of scope for this paper.
\end{rem}

Lastly, as we are interested in obtaining fractional error estimates we will call upon the following estimate several times throughout the error analysis section.
For $\gamma \in [0,1]$, we have,
\be\label{exp}
\left\lVert \frac{(e^{it\Delta }-1)}{(- t\Delta)^\gamma} u\right\lVert \le 2^{1-\gamma}\|u\|.
\ee
The above estimate easily follows from the usual bound
$$
|e^{ix} -1| \le 2^{1-\gamma} |x|^\gamma, \quad \gamma \in [0,1],
$$
and using the discrete spectral decomposition of the operator $\mathcal{L} =-\Delta$.

We finish this section by stating the definition of a commutator-type term, which is used in order to obtain low-regularity error estimates (see Section \ref{sec:local}). For $H(v_1, \cdots, v_n), \ n \ge 1$, a function and $L$ a linear operator, we define the commutator-type term $\mathcal{C}[H,L]$ as
$$
\mathcal{C}[H,L](v_1, \cdots, v_n) = -L(H(v_1, \cdots, v_n)) + \sum_{j=1}^{n} \partial_{v_j} H(v_1, \cdots, v_n) \cdot Lv_j.
$$
In our setting, $H(v_1, v_2) =f(v_1, v_2)= -iv_1^2 v_2$ is the nonlinearity given in \eqref{nonlin}, $L = i\Delta$, and hence
\be\label{deriv}
 \partial_{v_1} f(v_1, v_2) = -2iv_1v_2 \quad \text{and}  \quad \partial_{v_2}f(v_1,v_2) = -iv_1^2
\ee
and
\be\label{com}
\mathcal{C}[f,i\Delta](v_1,v_2) = -\Delta(v_1^2v_2) + 2v_1v_2\Delta v_1 + v_1^2 \Delta v_2=-2( |\nabla v_1|^2 v_2 + 2v_1 \nabla v_1 \cdot \nabla v_2).
\ee

\section{The implicit nature of the scheme}\label{sec:impl}

In this section we deal with the question of solving the nonlinear equation \eqref{num} at a given time step. We also provide an a priori bound on the numerical solution $\varphi^\tau(v)$ in terms of $v$, which is crucial for the convergence analysis. 
We recall from Figure \ref{CPU} that the implicit nature of the scheme does not adversely affect the computational cost of the method.
In the following, we fix $v \in X^\sigma$ for some $\sigma > d/2$. We note that $v$ will play the role of the element $u^n$ in the scheme \eqref{num}. We then introduce the map
$$
z \mapsto S(z) = e^{i\tau\Delta}v + \psi^{\tau/2}_{E}(v) + \psi^{\tau/2}_{I}(z),
$$
and wish to prove that it admits a unique fixed point given by $\varphi^\tau(v)$.

We start by introducing some useful estimates on the map $S$. 
\begin{prop}\label{prop:S} Given any $\sigma> d/2$ we have
$$
\|S(z_1) - S(z_2)\|_{\sigma} \le \tau M(\|z_1\|_{\sigma}, \|z_2\|_{\sigma}) \|z_1-z_2\|_{\sigma}, \quad \text{and} \quad \|S(e^{i\tau\Delta}z_1) - e^{i\tau\Delta}z_1\|_{\sigma} \le \tau \widetilde{M}(\|z_1\|_{\sigma}),
$$
where $M(\|z_1\|, \|z_2\|)$ and $\widetilde{M}(\|z_1\|)$ denote generic constants which depend continuously on their arguments $\|z_1\|$ and $\|z_2\|$.
\end{prop}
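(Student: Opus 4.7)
The plan is to reduce both bounds to two ingredients: (i) the Banach algebra property of $X^\sigma$ for $\sigma > d/2$, which follows from the bilinear estimate \eqref{bilinsmooth} combined with the norm equivalence \eqref{equivnorms}, and (ii) the uniform-in-$\tau$ boundedness of the operators $e^{i\tau\Delta}$ and $\varphi_1(\pm i\tau \Delta)$ on $X^\sigma$. For (ii), using the spectral resolution of $\mathcal L = -\Delta$ (periodic or Dirichlet) and the pointwise bound $|\varphi_1(\pm iy)| = |(e^{\pm iy}-1)/(\pm iy)| \le 1$ for all real $y$, one obtains $\|\varphi_1(\pm i\tau\Delta)w\|_\sigma \le \|w\|_\sigma$ uniformly in $\tau$; the operator $e^{i\tau\Delta}$ is an isometry on $X^\sigma$ for the same reason.

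For the first estimate, I would observe that only the implicit part of $S$ depends on $z$, so
$$S(z_1) - S(z_2) = \psi_I^{\tau/2}(z_1) - \psi_I^{\tau/2}(z_2) = -\frac{i\tau}{2}\bigl[z_1^2 \varphi_1(i\tau\Delta)\bar z_1 - z_2^2 \varphi_1(i\tau\Delta)\bar z_2\bigr].$$
Inserting the telescoping decomposition
$$z_1^2\varphi_1(i\tau\Delta)\bar z_1 - z_2^2\varphi_1(i\tau\Delta)\bar z_2 = (z_1+z_2)(z_1-z_2)\varphi_1(i\tau\Delta)\bar z_1 + z_2^2 \varphi_1(i\tau\Delta)\overline{(z_1-z_2)},$$
taking $X^\sigma$-norms and applying the algebra property together with the uniform bound on $\varphi_1(i\tau\Delta)$ yields the required Lipschitz estimate with constant $M(\|z_1\|_\sigma,\|z_2\|_\sigma) = \tfrac{1}{2}(\|z_1\|_\sigma + \|z_2\|_\sigma)\|z_1\|_\sigma + \tfrac{1}{2}\|z_2\|_\sigma^2$.

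For the second estimate, the natural reading (consistent with its later use in a Banach fixed-point argument around the initial guess $e^{i\tau\Delta}v$) is to take $v = z_1$, in which case the leading transport term cancels and
$$S(e^{i\tau\Delta}z_1) - e^{i\tau\Delta}z_1 = \psi_E^{\tau/2}(z_1) + \psi_I^{\tau/2}(e^{i\tau\Delta}z_1).$$
Each summand carries an explicit prefactor $\tau/2$ and is a cubic expression in $z_1$ (resp.\ $e^{i\tau\Delta}z_1$) composed with bounded operators $e^{i\tau\Delta}$ and $\varphi_1(\mp i\tau\Delta)$. The algebra property then gives $\|\psi_E^{\tau/2}(z_1)\|_\sigma \le \tfrac{\tau}{2}\|z_1\|_\sigma^3$ and $\|\psi_I^{\tau/2}(e^{i\tau\Delta}z_1)\|_\sigma \le \tfrac{\tau}{2}\|z_1\|_\sigma^3$, so the total is bounded by $\tau \widetilde M(\|z_1\|_\sigma)$ with $\widetilde M(r) = r^3$.

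The main difficulty is conceptual rather than technical: one must record the uniform boundedness of $\varphi_1(\pm i\tau\Delta)$ on $X^\sigma$ by the spectral argument above, so that all constants are independent of $\tau$. Once this is in place, everything else is a direct consequence of the fact that $X^\sigma$ is a Banach algebra for $\sigma > d/2$, and the two bounds drop out essentially by inspection of the formulas for $\psi_E^{\tau/2}$ and $\psi_I^{\tau/2}$.
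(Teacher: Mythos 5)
Your proof is correct and follows essentially the same route as the paper, whose one-line argument simply invokes the definition of $S$ and the nonlinear estimates \eqref{A2.2}; your telescoping decomposition and the uniform spectral bounds on $e^{i\tau\Delta}$ and $\varphi_1(\pm i\tau\Delta)$ are exactly what those estimates encapsulate. Your reading of the second bound with $v=z_1$ is also the intended one, matching its use with $x_0=e^{i\tau\Delta}v$ in Theorem \ref{thm:fpt}.
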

\begin{proof}[Proof of Proposition \ref{prop:S}]
The proof follows directly from the definition of the map $S$, the scheme \eqref{num} and of the estimate \eqref{A2.2}.
\end{proof}

The following theorem shows that the implicit scheme \eqref{num} is well-defined, and admits an a priori bound.
\begin{thm}\label{thm:fpt} Let $R>0$ and $\sigma > d/2$. There exists $\tau_R>0$ such that, for all $\tau\le \tau_R$ and $v \in X^\sigma$ with $\| v\|_{\sigma} \le R$, we have that $\varphi^\tau(v)$ defined in \eqref{num} is given by
\be\label{fpt}
\varphi^\tau(v) \overset{H^\sigma}{= \ } \lim_{j\rightarrow +\infty} S^j(e^{i\tau\Delta}v).
\ee
Moreover, under the same conditions, we have
\be\label{implBd}
\|\varphi^\tau(v)\|_{\sigma}\le 2R.
\ee
\end{thm}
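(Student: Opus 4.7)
The plan is a standard Banach fixed point argument applied to the map $S$ on a closed ball of $X^\sigma$, leveraging directly the two estimates of Proposition~\ref{prop:S}. Since $e^{i\tau\Delta}$ is an isometry on $X^\sigma$, the seed $z_0 := e^{i\tau\Delta}v$ satisfies $\|z_0\|_\sigma = \|v\|_\sigma \le R$, and the natural setting is the closed ball $B_{2R} := \{z \in X^\sigma : \|z\|_\sigma \le 2R\}$ centered at $0$.

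First I would choose $\tau_R > 0$ small enough so that
\[
\tau_R\,\widetilde{M}(R) \;\le\; \tfrac{R}{2}, \qquad \tau_R\,M(2R,2R) \;\le\; \tfrac{1}{2},
\]
where $M$ and $\widetilde{M}$ come from Proposition~\ref{prop:S}. With this choice, the second estimate of Proposition~\ref{prop:S} gives
\[
\|S(z_0) - z_0\|_\sigma \;\le\; \tau\,\widetilde{M}(R) \;\le\; \tfrac{R}{2},
\]
hence $\|S(z_0)\|_\sigma \le R + R/2 \le 2R$, so $z_1 := S(z_0) \in B_{2R}$. Then I would argue by induction: if $z_j := S^j(z_0) \in B_{2R}$ for every $j \le k$, the first estimate of Proposition~\ref{prop:S} yields
\[
\|z_{k+1} - z_k\|_\sigma \;\le\; \tau\,M(2R,2R)\,\|z_k - z_{k-1}\|_\sigma \;\le\; \tfrac{1}{2}\,\|z_k - z_{k-1}\|_\sigma,
\]
so by telescoping and a geometric series,
\[
\|z_{k+1} - z_0\|_\sigma \;\le\; \sum_{j=0}^{k} 2^{-j}\,\|z_1 - z_0\|_\sigma \;\le\; 2\tau\,\widetilde{M}(R) \;\le\; R,
\]
and consequently $\|z_{k+1}\|_\sigma \le 2R$, closing the induction.

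The contractive estimate above also shows that $(z_j)_{j\ge 0}$ is Cauchy in $X^\sigma$; since $X^\sigma$ is a Hilbert space and $B_{2R}$ is closed, there exists a limit $w \in B_{2R}$. By continuity of $S$ on bounded sets (again Proposition~\ref{prop:S}), passing to the limit in $z_{j+1} = S(z_j)$ shows $w = S(w)$, i.e.\ $w$ is a fixed point, and uniqueness within $B_{2R}$ follows from the contraction constant $1/2$. By construction this $w$ solves the nonlinear equation defining $\varphi^\tau(v)$ in \eqref{num}, so $\varphi^\tau(v) = w = \lim_j S^j(e^{i\tau\Delta}v)$, yielding \eqref{fpt}, and the bound $\|\varphi^\tau(v)\|_\sigma \le 2R$ is immediate from $w \in B_{2R}$, giving \eqref{implBd}.

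No real obstacle arises once Proposition~\ref{prop:S} is in hand; the only delicate point to track is that the Picard iterates remain in $B_{2R}$, which is precisely why the two conditions on $\tau_R$ are separated (the ``$R/2$'' condition controls the first step, and the ``$1/2$'' contraction condition controls subsequent increments via a geometric sum that must fit inside the remaining radius $R$).
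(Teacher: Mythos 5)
Your proposal is correct and follows essentially the same route as the paper: the same choice of $\tau_R$ (via $\tau_R\widetilde{M}(R)\le R/2$ and $\tau_R M(2R,2R)\le 1/2$), the same inductive telescoping/geometric-series argument keeping the Picard iterates $S^j(e^{i\tau\Delta}v)$ in the ball of radius $2R$, and the same Cauchy-sequence conclusion identifying the limit with the unique fixed point $\varphi^\tau(v)$. No substantive differences to report.
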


\begin{proof}[Proof of Theorem \ref{thm:fpt}]
For notational convenience we let $x_j = S^j(x_0)$, with $x_0 = e^{i\tau\Delta}v$.
We first show by induction that for sufficiently small $\tau$ we have the bound
\be\label{fptbd}
\|x_j\|_{\sigma} \le 2R, \quad j\ge 0. 
\ee
We choose $\tau_R>0$ such that $\tau_R M(2R,2R) \le 1/2$ and $\tau_R\widetilde{M}(R)\le R/2$, with $M$ and $\widetilde M$ from Proposition \ref{prop:S}. We assume that $ \|x_j\|_{\sigma} \le 2R$, $\forall j \le J$. It follows that for $\tau \le \tau_R$ we have,
\begin{align*}
\|x_{J+1} -x_0\|_{\sigma} &\le \sum_{j=1}^{J} ||S(x_j) - S(x_{j-1})||_{\sigma} + \|S(x_0) - x_0 \|_{\sigma}\\
&\le \sum_{j=0}^{J}  \left( \prod_{k=1}^{j} \tau M(\|x_k \|_{\sigma}, \|x_{k-1} \|_{\sigma}) \right)  \|S(x_0) - x_0\|_{\sigma} \\
&\le \frac{R}{2} \sum_{j=0}^{J} \frac{1}{2^j} \\
&\le R.
\end{align*}
By recalling that, by assumption, $\| x_0\| = \| v\| \le R$, we conclude from the above that 
$$
\|x_{J+1}\|_{\sigma} \le 2R,
$$
and hence by induction bound \eqref{fptbd} holds.

It then follows that for all $ \tau \le \tau_R$, $(x_j)_{j\in\mathbb{N}}$ is a Cauchy sequence. Indeed, for $m>p$ we have
\begin{align*}
\| x_m - x_p \|_{\sigma} \le \sum_{j=p}^{m-1} \|S(x_j) - S(x_{j-1}) \|_{\sigma} \le \frac{R}{2}\sum_{j=p}^\infty \frac{1}{2^j} \underset{p\rightarrow \infty}{\longrightarrow} 0.
\end{align*}
This implies that the sequence $(x_j)_{j\in\mathbb{N}}$ converges in $H^\sigma$ to the unique fixed-point $\varphi^\tau(v)$ of $S$, and the characterization \eqref{fpt} follows. Finally, by passing to the limit in \eqref{fptbd} we obtain the desired a priori bound \eqref{implBd} on $\varphi^\tau(v)$, which concludes the proof.
\end{proof}

\section{Error Analysis}\label{sec:errAnal}
In this section we will prove the following proposition.
\begin{prop}\label{prop:toProve}
Let $T>0$, and $\gamma \in [0,1]\backslash \{ \frac{d}{4}\}$. 
Then there exists $\tau_{\min} > 0$ such that for every time step $\tau \le \tau_{\min}$ the numerical solution $u^n$ given in equation \eqref{num} has the following error bound: 
\be
\|u(n\tau) - u^n\|_{L^2} \le C_T(\sup_{[0,T]}\|u(t)\|_{H^\alpha}) \tau^{1+\gamma}, \quad 0 \le n\tau \le T,
\ee
where $\alpha$ is given by
\be\label{condition}
\begin{cases}
\alpha = 2\gamma +1 \quad &\text{if} \ \frac{d}{4} < \gamma \le 1 \\
\alpha = \gamma +1 + \frac{d}{4} \quad &\text{if} \ \ 0 \le \gamma < \frac{d}{4}
\end{cases},
\ee

and where $\tau_{\min}$ depends on $T$ and on $\| u_0\|_{H^\alpha}$, and $C_T$ is a positive function of its argument, depending on $T$. 
\end{prop}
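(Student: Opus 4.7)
The plan is to follow the classical Lady Windermere's fan argument: decompose the global error via telescoping into propagated local errors, then bound each factor separately. Writing $u^{n+1} = \varphi^\tau(u^n)$ and denoting $\mathcal{E}^n := u(t_{n+1}) - \varphi^\tau(u(t_n))$ the local truncation error obtained by feeding the exact solution to the scheme, the identity
\[
u(t_{n+1}) - u^{n+1} = \left[\varphi^\tau(u(t_n)) - \varphi^\tau(u^n)\right] + \mathcal{E}^n,
\]
iterated in $n$ and combined with a discrete Gronwall lemma, reduces the proof to showing (i) a local error estimate $\|\mathcal{E}^n\|_{L^2} \le C \tau^{2+\gamma}$ uniformly in $n$ with $0\le n\tau \le T$, and (ii) an $L^2$ stability estimate $\|\varphi^\tau(v) - \varphi^\tau(w)\|_{L^2} \le (1 + C\tau)\|v-w\|_{L^2}$ for $v,w$ bounded in $X^\sigma$, $\sigma = d/2+\epsilon$. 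The a priori bound \eqref{implBd} from Theorem~\ref{thm:fpt}, propagated inductively, will ensure that the numerical iterates remain in a ball where the nonlinearity is Lipschitz in the sense of \eqref{A2.2}.

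\textbf{Local error.} I would work in the twisted variable $v = e^{-it\Delta}u$ and compare Duhamel's formula with the symmetric quadrature \eqref{h-approx} defining the scheme. The resulting expression for the local error splits, as in \eqref{reg}, into a contribution coming from the approximation of the slow oscillation $e^{i\omega_2 s}$ by its midpoint value on each half-interval $[0,\tau/2]$ and $(\tau/2,\tau]$, plus a contribution coming from the variation of the non-oscillatory factor $g(s) = \overline{v}_{k_1}v_{k_2}v_{k_3}$. In physical space, the first contribution unfolds into a term of the commutator type $\mathcal{C}[f,i\Delta]$ defined in \eqref{com}, whose leading term $|\nabla u|^2 u$ costs two derivatives, while the second, upon Taylor expanding $g$ at $s=\tau/2$, costs one additional derivative through $\partial_{v_j}f \cdot \partial_t v$; together these account for the $H^3$ threshold at $\gamma=1$. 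For $\gamma<1$, I would replace the sharp Taylor remainder by the fractional bound \eqref{exp} applied either to $(e^{i\omega_1 s} - e^{i\omega_1\tau/2})$ or to $(e^{i\omega_2 s}-1)$, trading a fractional power of $\tau$ for $H^{2\gamma}$ regularity on one of the factors. Combining this with the bilinear estimates \eqref{bilinsmooth}, \eqref{estim1}, \eqref{estim2}, \eqref{estim3} applied to the cubic nonlinearity, and distributing derivatives between $\overline u$, $u$, $u$ as explained in Remark~\ref{rem:bilin}, yields the required $\tau^{2+\gamma}$ bound under the regularity threshold $\alpha$ dictated by \eqref{condition}.

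\textbf{Stability.} Subtracting the implicit identities defining $\varphi^\tau(v)$ and $\varphi^\tau(w)$ gives
\[
\varphi^\tau(v) - \varphi^\tau(w) = e^{i\tau\Delta}(v-w) + \left[\psi_E^{\tau/2}(v) - \psi_E^{\tau/2}(w)\right] + \left[\psi_I^{\tau/2}(\varphi^\tau(v)) - \psi_I^{\tau/2}(\varphi^\tau(w))\right].
\]
The two nonlinear differences are controlled in $L^2$ by $C\tau \|v-w\|_{L^2}$ and $C\tau \|\varphi^\tau(v) - \varphi^\tau(w)\|_{L^2}$ respectively, using the Lipschitz estimate \eqref{A2.2} with $s=0$ and $\sigma = d/2+\epsilon$, together with the a priori bound \eqref{implBd}. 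Absorbing the implicit piece into the left-hand side for $\tau$ small gives the required $(1+C\tau)$ stability. An induction on $n$, using Theorem~\ref{thm:fpt} to keep $\|u^n\|_{X^\alpha}$ uniformly controlled by $2\sup_{[0,T]}\|u(t)\|_{X^\alpha}$ as long as $n\tau\le T$ and $\tau\le \tau_{\min}$, closes the argument via discrete Gronwall and produces the asserted bound.

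\textbf{Main obstacle.} The genuinely delicate step is the fractional local error estimate: one has to isolate the piece of the quadrature error that gains one derivative over a naive Taylor expansion (the $\omega_2$-commutator gain) and then use \eqref{estim1}--\eqref{estim3} to distribute derivatives among the three factors in an optimal way that saturates \eqref{condition}. The regime $\gamma < d/4$ is the subtlest, since $H^{2\gamma}$ is not an algebra and one cannot bound $\|uv\|_{H^{2\gamma}}$ by $\|u\|_{H^{2\gamma}}\|v\|_{H^{2\gamma}}$; this forces the asymmetric transfer of derivatives via \eqref{estim1}, which is precisely why $\alpha = \gamma + 1 + d/4$ (and not $2\gamma+1$) in that regime.
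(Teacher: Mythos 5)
Your overall architecture --- an $L^2$ local error of order $\tau^{2+\gamma}$, an $L^2$ stability bound with constant $1+C\tau$ obtained by absorbing the implicit piece for small $\tau$, and a Lady Windermere's fan --- is exactly the paper's, and your stability paragraph coincides with Theorem~\ref{thm:stab}. The genuine gap is in how you keep the numerical iterates bounded. You propose to control $\|u^n\|_{X^\alpha}$ by ``propagating inductively'' the fixed-point a priori bound \eqref{implBd}. That bound only gives $\|\varphi^\tau(v)\|_{\sigma}\le 2R$ for $\|v\|_{\sigma}\le R$ at a \emph{single} step; iterated over $n\sim T/\tau$ steps it yields $\|u^n\|_{\sigma}\le 2^{n}\|u_0\|_{\sigma}$, which is useless, and since every Lipschitz constant in \eqref{A2.2} depends on the $X^{\sigma}$-norms of its arguments, the Gronwall argument collapses without the uniform bound \eqref{bound_sigma}. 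The paper closes this by running an auxiliary convergence argument in $X^{\sigma}$, $\sigma=d/2+\epsilon$: it proves a local error bound of order $\tau^{1+\delta}$, $\delta>0$, in $X^{\sigma}$ by \emph{interpolating} (see \eqref{interpolation}) between the $L^2$ local error of order $\tau^{2+\gamma}$ and a crude first-order local error in $X^{2\gamma+1}$ (resp.\ $X^{\gamma+1+d/4}$) given in \eqref{local-r_1}; this shows $u^n$ stays $O(\tau^{\delta})$-close to $u(t_n)$ in $H^{\sigma}$, hence uniformly bounded. This interpolation step is the missing idea in your write-up, and it is precisely where the hypothesis $\alpha>\sigma$ is used.

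Two secondary points. First, your local error derivation is carried out in Fourier variables on the torus, whereas the proposition must also feed into Theorem~\ref{GlobalOmega} on a smooth bounded domain; the paper derives the same commutator structure in physical space via the filtered function \eqref{filterfcn} and semigroup calculus, and your computation would have to be recast in that language. Second, the scheme is implicit: the local error contains $\psi_{I}^{\tau/2}\bigl(\varphi^{\tau}(u(t_n))\bigr)$ and not $\psi_{I}^{\tau/2}\bigl(u(t_{n+1})\bigr)$. The paper isolates the difference $\mathcal{R}_2$ in \eqref{R_2}, which is a priori only $O(\tau^2)$, and proves in Lemma~\ref{lem:R2E2} that its second-order part cancels against the matching term $\tilde{\mathcal{E}}_{2,1}$ in \eqref{E_21} produced by the Duhamel iteration inside $\mathcal{R}_3$; your sketch never accounts for this cancellation (or, equivalently, for the alternative decomposition \eqref{otherDecomp} centred at $t_{n+1}$ on the right half-interval), without which the local error is only $O(\tau^{2})$ and the claimed rate is lost.
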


Given a fixed convergence rate this proposition expresses the regularity assumptions needed in order to obtain this rate, while on the other hand given a fixed regularity assumption on the initial data Theorems \ref{GlobalT} and \ref{GlobalOmega} express the convergence rates one can attain with the method \eqref{num}. We now link these results.

\begin{proof}[Proof of Theorem \ref{GlobalT} and Theorem \ref{GlobalOmega}] By writing the convergence rate $\gamma$ in terms of the regularity assumptions needed on the solution,
it directly follows from the above proposition that the convergence rate \eqref{Errbd} holds for $\alpha$ and $\gamma$ which satisfy:
\be\label{bob1}
\begin{cases}
1+ \frac{d}{2} < \alpha \le 3, \quad \gamma = \frac{\alpha -1}{2}\\
1+ \frac{d}{4} \le \alpha < 1 + \frac{d}{2},\quad \gamma = \alpha -1 -\frac{d}{4}.
\end{cases}
\ee
The proof of Theorem \ref{GlobalT} then follows by using the fact that 
if \eqref{Errbd} holds for some $\tilde\alpha \in [1+ \frac{d}{4},3], \tilde\gamma \in [0,1]$, then the error bound also holds for any $\alpha \ge \tilde\alpha$, and $\gamma\le \tilde\gamma$. In particular, we recover the case $\alpha = 1 + \frac{d}{2}$ in Theorem \ref{GlobalT} by applying the second line in \eqref{bob1} with $(\tilde\alpha, \tilde\gamma) = (1 + \frac{d}{2} - \epsilon,\frac{d}{4} - \epsilon)$ to obtain convergence for $0 \le \gamma < \frac{d}{4}$. See Figure \ref{cv-graph} which illustrates graphically the convergence result.

\begin{figure}[H]
\centering
\begin{tikzpicture}[scale=0.9]
\draw[draw=none, fill=blue, opacity=0.3] (0,1.2)--(2,2)--(3,3)--(0,3)--cycle;
\draw[->,ultra thick] (0,0)--(4,0) node[right]{$\gamma$};
\draw[->,ultra thick] (0,0)--(0,4) node[above]{$\alpha$};
\node[label=below:{$d/4$}] at (2,0) {$\bullet$};
\node[label=left:{$1+d/2$}] at (0,2) {$\bullet$};
\node[label=left:{$1+d/4$}] at (0,1.2) {$\bullet$};
\node[label=left:{$3$}] at (0,3) {$\bullet$};
\node[label=below:{$1$}] at (3,0) {$\bullet$};
\node[label=below:{$0$}] at (0,0) {$\bullet$};
\draw (0,1.2)--(2,2);
\draw (2,2)--(3,3);
\draw [fill=white] (2,2) circle[radius= 0.2 em]; 
\end{tikzpicture}
\caption{Illustration of the interplay between the regularity parameter $\alpha$ and the convergence rate parameter $\gamma$ in the convergence result stated in Proposition \eqref{prop:toProve} and Theorem \ref{GlobalT}. We plot the regularity assumption ($u(t) \in H^{\alpha}$) needed in order to obtain convergence of order $\tau^{1+\gamma}$.
}\label{cv-graph}
\end{figure}
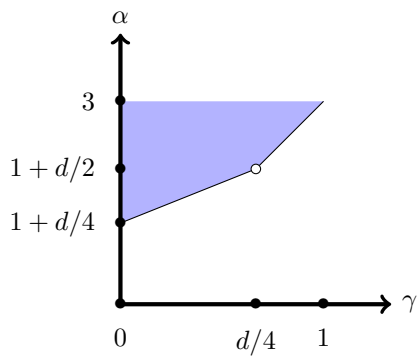

The proof of Theorem \ref{GlobalOmega} follows in the same manner, with the added constraint of the boundary conditions. Namely, we require $u \in C([0,T], X^s)$ and $s = 2\gamma +1$ or $\gamma+1+d/4$, where the boundary conditions are imposed in the definition of the space $X^s$. See Section \ref{sec:notation} for the definition of the spaces $X^s$.

\end{proof}

In order to prove Proposition \ref{prop:toProve} we combine local error bounds together with a stability argument to conclude via a Lady Windermere's fan argument. We start by showing the local error bound of order $\tau^{2+\gamma}$ with the regularity assumptions stated in Proposition \ref{prop:toProve}.
\subsection{Local error analysis}\label{sec:local}
We decompose the local error term as follows,
\begin{align*}
u(t_n + \tau) - \varphi^{\tau}\bigl(u(t_n)\bigr)  &= \int_0^\tau e^{i(\tau-s)\Delta}f\bigl(u(t_n + s), \bar{u}(t_n + s)\bigr) ds - \psi_{E}^{\tau/2}\bigl(u(t_n)\bigr)-\psi_{I}^{\tau/2}\Bigl(\varphi^{\tau}\bigl(u(t_n)\bigr)\Bigr)
\\
&=\mathcal{R}_1(\tau,t_n) + \mathcal{R}_2(\tau,t_n) + \mathcal{R}_3(\tau,t_n),
\end{align*}
with 
\begin{align}\label{R^2_1}
\mathcal{R}_1(\tau,t_n) &= \left(\int_0^{\tau/2} e^{i(\tau-s)\Delta}f\bigl(e^{is\Delta}u(t_n), e^{-is\Delta}\bar{u}(t_n)\bigr)ds - \psi_{E}^{\tau/2}\bigl(u(t_n)\bigr)\right) \\\nonumber
&\quad + \left( \int_{\tau/2}^\tau e^{i(\tau-s)\Delta}f\bigl(e^{is\Delta}u(t_n),e^{-is\Delta}\bar{u}(t_n)\bigr)ds - \psi_{I}^{\tau/2}\bigl(e^{i\tau\Delta} u(t_n)\bigr) \right), \\\nonumber
\end{align}
\begin{align}\label{R_2}
\mathcal{R}_2(\tau,t_n) &=  \psi_{I}^{\tau/2}\left(e^{i\tau\Delta} u(t_n)\right) - \psi_{I}^{\tau/2}\Bigl(\varphi_{\tau}\bigl(u(t_n)\bigr)\Bigr) ,
\end{align}
and
\begin{align*}
\mathcal{R}_3(\tau,t_n) &= \int_0^\tau e^{i(\tau-s)\Delta}\Bigl(f\bigl(u(t_n + s), \bar{u}(t_n + s)\bigr) - f\bigl(e^{is\Delta}u(t_n), e^{-is\Delta}\bar{u}(t_n)\bigr)\Bigr)ds.
\end{align*}

We start by estimating the first error term $\mathcal{R}_1(\tau,t_n)$. This term is the one which asks for the most regularity, and hence dictates the regularity assumptions required on the solution, and thereby the initial data. We then proceed by estimating each of the terms $\mathcal{R}_2(\tau,t_n)$ and $\mathcal{R}_3(\tau,t_n)$, to obtain a cancelation in their sum thanks to the symmetry of the scheme, yielding the desired local error estimate.

\begin{prop}\label{prop:R1bd} The error term $\mathcal{R}_1(\tau,t_n) $ satisfies the following bound,
\be\label{localToProve}
\|\mathcal{R}_1(\tau,t_n) \| \le  \left\{
\begin{array}{ll}
C_{T}\left(\sup_{t\in [0,T]}\|u(t) \|_{X^{2\gamma +1}}\right) \tau^{2+ \gamma} &\quad \text{if} \ \gamma > \frac{d}{4}\vspace{1mm}\\ 
C_{T}\left(\sup_{t\in [0,T]}\|u(t) \|_{X^{\gamma +1 + \frac{d}{4}}}\right) \tau^{2+ \gamma} & \quad \text{if} \ 0 \le \gamma < \frac{d}{4} 
\end{array} \right. ,
\ee
for $0 \le t_n \le T$.
\end{prop}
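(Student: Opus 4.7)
The plan is to exploit the cancellation between the explicit and implicit halves $\mathcal R_1^E$ and $\mathcal R_1^I$ of $\mathcal R_1$: a naive bound on each piece would give only $O(\tau^{1+\gamma})$, and the extra factor of $\tau$ needed to reach $\tau^{2+\gamma}$ is exactly the benefit of the symmetric design of the scheme. I would carry out the argument first in Fourier on $\mathbb T^d$, where the algebra is most transparent, then transfer it to a smooth bounded $\Omega\subset\mathbb R^d$ by replacing the Fourier series with the spectral resolution of $-\Delta$ under Dirichlet conditions, with the bilinear estimates \eqref{bilinsmooth}--\eqref{estim3} carried onto $\Omega$ via Stein's extension theorem (as noted at the end of Section \ref{sec:notation}).

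Following the derivation of Section 1.1, the first step is to write $\mathcal R_1$ in Fourier at mode $k=-k_1+k_2+k_3$ as $\widehat{\mathcal R_1}(k)=-ie^{-i\tau k^2}\sum_{k=-k_1+k_2+k_3}\overline{u_{n,k_1}}u_{n,k_2}u_{n,k_3}\,(I_1+I_2)$, with $\omega_1=2k_1^2$, $\omega_2=k^2-k_1^2-k_2^2-k_3^2$, and
\[
I_1=\int_0^{\tau/2}e^{is\omega_1}(e^{is\omega_2}-1)\,ds,\qquad I_2=\int_{\tau/2}^{\tau}e^{is\omega_1}(e^{is\omega_2}-e^{i\omega_2\tau})\,ds.
\]
The substitution $s\mapsto \tau-s$ in $I_2$ produces the symmetric identity $I_2=e^{i(\omega_1+\omega_2)\tau}\overline{I_1}$, whence a direct simplification (using $\sin z = z\,\mathrm{sinc}(z)$) yields the compact form
\[
|I_1+I_2|=\frac{|\omega_2|\tau}{|\omega_1|}\bigl|\mathrm{sinc}((\omega_1+\omega_2)\tau/2)-\mathrm{sinc}(\omega_2\tau/2)\bigr|,
\]
which is the quantitative form of the midpoint-centred cancellation flagged in Remark~\ref{rem:H3sol}.

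The second step is to prove the sharp symbol bound $|I_1+I_2|\lesssim \tau^{2+\gamma}|\omega_2|(|\omega_1|+|\omega_2|)^\gamma$ for $\gamma\in[0,1]$. On the one hand, the pointwise bound $|e^{is\omega_2}-1|\lesssim \min(2,s|\omega_2|)$ applied directly to $I_1$ and $I_2$ yields $|I_1+I_2|\lesssim \tau^2|\omega_2|$. On the other hand, in the small-frequency regime $\tau(|\omega_1|+|\omega_2|)\lesssim 1$, using $|\mathrm{sinc}'(\xi)|\lesssim |\xi|$ inside the compact formula above gives the cancellation bound $|I_1+I_2|\lesssim \tau^3|\omega_2|(|\omega_1|+|\omega_2|)$. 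Writing the minimum of the two as $\tau^2|\omega_2|\min(1,\tau(|\omega_1|+|\omega_2|))$ and using $\min(1,z)\le z^\gamma$ delivers the claimed estimate.

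Finally, I translate the symbol bound into an $L^2$-estimate on $\mathcal R_1$ using the bilinear estimates of Section~\ref{sec:notation}. Bounding $|\omega_2|\lesssim \sum_{i\ne j}|k_i||k_j|$ and $(|\omega_1|+|\omega_2|)^\gamma\lesssim \sum_i\langle k_i\rangle^{2\gamma}$, the multiplier is dominated by sums of products with total derivative order $2+2\gamma$ and with at most $2\gamma+1$ derivatives on any single factor. In the smooth regime $\gamma>d/4$, the Banach-algebra estimate \eqref{bilinsmooth} on $H^{2\gamma}$ bounds each such product in $L^2$ by $\|u\|_{H^{2\gamma+1}}^3$; in the rough regime $\gamma<d/4$, the refined estimates \eqref{estim1}--\eqref{estim3} are applied to equidistribute the regularity among the three factors, yielding the bound $\|u\|_{H^{\gamma+1+d/4}}^3$, in agreement with \eqref{condition}. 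The main obstacle of the whole argument is the sharp symbol bound: without the exact algebraic identity $I_2=e^{i(\omega_1+\omega_2)\tau}\overline{I_1}$ one loses the full power of $\tau$ gained by symmetry, and the fractional interpolation between the smooth and rough symbol bounds is then what delivers the rate $\tau^{2+\gamma}$ uniformly in $\gamma\in[0,1]$.
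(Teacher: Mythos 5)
Your argument is correct on the torus and takes a genuinely different route from the paper's. You compute the exact Fourier symbol of $\mathcal{R}_1$ and exploit the reflection identity $I_2=e^{i(\omega_1+\omega_2)\tau}\overline{I_1}$ to reach the closed form $|I_1+I_2|=\frac{|\omega_2|\tau}{|\omega_1|}\bigl|\mathrm{sinc}((\omega_1+\omega_2)\tau/2)-\mathrm{sinc}(\omega_2\tau/2)\bigr|$ (which checks out), and the elementary interpolation $\min(1,z)\le z^{\gamma}$ between the crude bound $\tau^2|\omega_2|$ and the cancellation bound $\tau^3|\omega_2|(|\omega_1|+|\omega_2|)$ gives the symbol estimate $\tau^{2+\gamma}|\omega_2|(|\omega_1|+|\omega_2|)^{\gamma}$; the trilinear estimates of Section~\ref{sec:notation} then convert this into the stated norms. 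The paper instead works at the operator level: it introduces the filtered function $\mathcal{N}(\tau,s,\zeta,\Delta,v)$, Taylor-expands it about $s=0$ on $[0,\tau/2]$ and about $s=\tau$ on $[\tau/2,\tau]$ so that $\mathcal{R}_1$ becomes a difference of two double integrals of the commutator $\mathcal{C}[f,i\Delta]$, and gains the extra $\tau^{\gamma}$ from a change of variables together with the fractional semigroup estimate \eqref{exp}. Your symbol computation is sharper and more transparent on $\mathbb{T}^d$; the paper's formulation is coordinate-free, which is what makes the bounded-domain case accessible.

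That said, there is a genuine gap in your treatment of the case of a smooth bounded domain $\Omega\subset\mathbb{R}^d$, which the Proposition (stated in the $X^s$ norms) must cover. The transfer you propose --- replacing the Fourier series by the spectral resolution of $-\Delta$ with Dirichlet conditions --- does not carry your argument over: the resonance decomposition $k=-k_1+k_2+k_3$, the frequencies $\omega_1=2k_1^2$ and $\omega_2=k^2-k_1^2-k_2^2-k_3^2$, and hence the factors $e^{i\omega_2 s}$, all rely on the characters of $\mathbb{T}^d$ simultaneously diagonalizing $-\Delta$ and turning multiplication into convolution in frequency. For Dirichlet eigenfunctions $\phi_j$ the product $\overline{\phi_{j_1}}\phi_{j_2}\phi_{j_3}$ is not an eigenfunction and has components on every eigenspace, so neither the identity $I_2=e^{i(\omega_1+\omega_2)\tau}\overline{I_1}$ nor the symbol $|\omega_2|(|\omega_1|+|\omega_2|)^{\gamma}$ is even defined there; Stein's extension theorem transports the bilinear estimates but not this resonance structure. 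Closing that case requires recasting the cancellation in operator form, which is exactly what the filtered function and the commutator $\mathcal{C}[f,i\Delta]$ in the paper's proof accomplish. A minor additional point: your closed formula degenerates when $\omega_1=0$ (i.e.\ $k_1=0$); the bound survives by continuity, but this should be stated.
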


\begin{proof}[Proof of Proposition \ref{prop:R1bd}]
 We define the filtered function as
\be\label{filterfcn}
\begin{split}
\mathcal{N}(\tau, s, \zeta, \Delta, v) 
&= -ie^{i(\tau-s)\Delta} [ (e^{is\Delta} v)^2(e^{is\Delta}e^{-2i\zeta \Delta} \bar{v})]\\
& = e^{i(\tau-s)\Delta}  f(e^{is\Delta} v, e^{i(s-2\zeta)\Delta}\bar{v})
\end{split}
\ee
which plays a fundamental role in the derivation and analysis of our scheme on general domains.
In the above expression we duplicate the time variable into $s$ and $\zeta $, pulling out a factor $e^{i s\Delta}$ in front of the conjugate term $e^{i(s-2\zeta)\Delta}\bar{v}$. Taylor expanding in the variable $s$ yields the right cancellation with the factor $e^{i(\tau-s)\Delta}$ to recover, after integrating in the variable $\zeta$, the explicit term $\psi_{E}^{\tau/2}(v)$ in the scheme \eqref{num}, as is detailed below. Similar filtering techniques are used in \cite{RS, YGP, ABBSBdd}. 

Let $v = u(t_n)$. By Taylor expanding the filtering function \eqref{filterfcn} around $s=0$, we obtain that the first term in \eqref{R^2_1} satisfies
\begin{align}\label{R1e1}
\int_0^{\tau/2} e^{i(\tau-\zeta)\Delta}f(e^{i\zeta\Delta}v, e^{-i\zeta\Delta}\bar{v})d\zeta &= \int_0^{\tau/2} \mathcal{N}(\tau, \zeta, \zeta, \Delta, v)d\zeta \\\nonumber
& = \int_0^{\tau/2}\mathcal{N}(\tau, 0, \zeta, \Delta, v)d\zeta + \int_0^{\tau/2} \int_0^\zeta \partial_s \mathcal{N}(\tau ,s,\zeta, \Delta, v) ds d\zeta
\\\nonumber
&= \psi_{E}^{\tau/2}(v)+ \int_0^{\tau/2} \int_0^\zeta e^{i(\tau-s)\Delta} \mathcal{C}[f,i\Delta](e^{is\Delta}v, e^{i(s-2\zeta)\Delta}\bar{v}) ds d\zeta,
\end{align}
where to obtain the last line we used the definition of the $\varphi_1$ function (see \eqref{LRexpl}) to obtain that
$$
\int_0^{\tau/2} e^{-2i\zeta\Delta}d\zeta = \frac{\tau}{2}\frac{e^{-i\tau\Delta} - 1}{-i\tau \Delta}=\frac{\tau}{2} \varphi_1(-i\tau \Delta)
$$
and the definition of commutator term $\mathcal{C}[f,i\Delta](u,v)$ given in \eqref{com}.

Similarly, using the filtering function \eqref{filterfcn}, we can treat the second line in \eqref{R^2_1} by Taylor expanding around $s=\tau$. This yields

\be
\begin{aligned}\label{R1e2}
\int_{\tau/2}^\tau e^{i(\tau-\zeta)\Delta}f(e^{i\zeta\Delta}v, e^{-i\zeta\Delta}\bar{v})d\zeta &= \int_{\tau/2}^{\tau} \mathcal{N}(\tau, \zeta, \zeta, \Delta, v)d\zeta \\
& = \int_{\tau/2}^\tau\mathcal{N}(\tau, \tau, \zeta, \Delta, v)d\zeta - \int_{\tau/2}^\tau\int_{\zeta}^{\tau}  \partial_s \mathcal{N}(\tau ,s,\zeta, \Delta, v) ds d\zeta
\\
&= \psi_{I}^{\tau/2}(e^{i\tau\Delta}v)  - \int_{\tau/2}^\tau \int_\zeta^{\tau} e^{i(\tau-s)\Delta} \mathcal{C}[f,i\Delta](e^{is\Delta}v, e^{i(s-2\zeta)\Delta}\bar{v}) ds d\zeta,
\end{aligned}
\ee
where to go from the second line in the above to the third we used that
\begin{align*}
\int_{\tau/2}^\tau\mathcal{N}(\tau, \tau, \zeta, \Delta, v)d\zeta & = -i(e^{i\tau\Delta}v)^2 \left( \left(\int_{\tau/2}^\tau e^{-2i\zeta\Delta}d\zeta\right)e^{i\tau\Delta}\bar{v}\right) \\ \nonumber
&= -i(e^{i\tau\Delta}v)^2 \left(  \left(\int_0^{\tau/2}e^{-2i(\tau-\zeta)\Delta}d\zeta\right) e^{i\tau\Delta}\bar{v}\right) \\\nonumber
&= -i\frac{\tau}{2} (e^{i\tau\Delta}v)^2\varphi_1(i\tau\Delta)(e^{-i\tau\Delta}\bar{v}),\\\nonumber
&= \psi_{I}^{\tau/2}(e^{i\tau\Delta}v).
\end{align*}

By definition of $\mathcal{R}_1(\tau,t_n)$ and by using \eqref{R1e1} and \eqref{R1e2} we have that

\be\label{R^1_2bd}
\begin{aligned}
\mathcal{R}_1(\tau,t_n) &= \int_0^{\tau/2} \int_0^\zeta e^{i(\tau-s)\Delta} \mathcal{C}[f,i\Delta](e^{is\Delta}v, e^{is\Delta}e^{-2i\zeta\Delta}\bar{v}) ds d\zeta \\
&\quad - \int_{\tau/2}^\tau \int_\zeta^{\tau} e^{i(\tau-s)\Delta} \mathcal{C}[f,i\Delta](e^{is\Delta}v, e^{is\Delta}e^{-2i\zeta\Delta}\bar{v}) ds d\zeta\\
&= \int_0^{\tau/2} \int_0^\zeta e^{i(\tau-s)\Delta} \mathcal{C}[f,i\Delta](e^{is\Delta}v, e^{is\Delta}e^{-2i\zeta\Delta}\bar{v}) \\
&\quad - e^{is\Delta} \mathcal{C}[f,i\Delta](e^{i(\tau-s)\Delta}v, e^{i(\tau-s)\Delta}e^{-2i(\tau-\zeta)\Delta}\bar{v})dsd\zeta.
\end{aligned}
\ee

Note that the local error structure lead by the commutator-type terms in the above expression requires less regularity assumptions than what is required by classical methods, such as exponential integrators of splitting methods (see \cite{ExpInt, L}). 
Indeed, from the explicit form \eqref{com} of the commutator term we see that this error term requires only one additional derivative on the initial datum rather than two (see also \cite{YGP, ABBSBdd}).

We now show that thanks to the symmetry of the scheme, we obtain a cancelation in the second-order error term \eqref{R^1_2bd} yielding (up to) a third-order remainder.
For notational convenience we let $w_1(s) = e^{is\Delta}v,$ $w_2(s,\zeta) = e^{i(s-2\zeta)\Delta}\bar{v}$, $z_1(s) = e^{i(\tau-s)\Delta}v$, and $z_2(s,\zeta) =e^{i(2\zeta - \tau-s)\Delta} \bar{v}$ and we denote the integrand by
\be\label{R_r}
R_r( \tau, s, \zeta, v) = e^{i(\tau-s)\Delta} \mathcal{C}[f,i\Delta](w_1(s), w_2(s,\zeta)) - e^{is\Delta} \mathcal{C}[f,i\Delta](z_1(s), z_2(s,\zeta)).
\ee
It follows from the above equations \eqref{R^1_2bd} and \eqref{R_r} that
\begin{align*}
\|\mathcal{R}_1(\tau,t_n)\| &\le \frac{\tau^2}4 \sup_{s,\zeta \in [0,\tau/2]} \|R_r( \tau, s, \zeta,  v) \|.
\end{align*}
It remains to show that
 \begin{align}\label{R1rbound}
 \sup_{s,\zeta \in [0,\tau/2]}\|R_r( \tau, s, \zeta, v)\| \le 
 \begin{cases}
C(\|v\|_{2\gamma + 1})\tau^\gamma\ \text {if} \ \gamma >d/4 \\ 
C(\|v\|_{\gamma + 1 + \frac{d}{4}})\tau^\gamma \ \text {if} \ \gamma < d/4
\end{cases}.
 \end{align}
We first approximate the exponentials appearing in front of both commutator terms in \eqref{R_r} to obtain the following first approximation result on $R_r$.
\begin{lem}\label{lem:R1_1} We have
\begin{align*}
R_r( \tau, s, \zeta, v) = R^1_r( \tau, s, \zeta, v) + R^2_r( \tau, s, \zeta, v),
\end{align*}
with
$$
R^1_r(\tau,  s, \zeta, v) = (e^{i(\tau-s)\Delta}-1) \mathcal{C}[f,i\Delta](w_1(s),w_2(s,\zeta)) - (e^{is\Delta}-1)\mathcal{C}[f,i\Delta](z_1(s), z_2(s,\zeta))
$$ and
$$ R^2_r( \tau, s, \zeta, v) = \mathcal{C}[f,i\Delta](w_1(s), w_2(s,\zeta)) - \mathcal{C}[f,i\Delta](z_1(s), z_2(s,\zeta)),
$$
 which satisfy
\be\label{R1r_bound}
\sup_{s,\zeta \in [0,\tau/2]}\|R^i_r( \tau, s, \zeta, \tau, v)\| \le
\begin{cases}
C(\|v\|_{2\gamma + 1})\tau^\gamma\ \text {if} \ \gamma >d/4, \\
C(\|v\|_{\gamma + 1 + \frac{d}{4}})\tau^\gamma \ \text {if} \ \gamma < d/4,
\end{cases}
\ee
for $i=1,2$.

\end{lem}
\begin{proof}[Proof of Lemma \ref{lem:R1_1}]
We write the error term $R^1_r$ as
\be\label{R^1_r}
\begin{aligned}
R^1_r( s, \zeta, \tau, v, \bar{v}) = & (\tau-s)^{\gamma}\left(\frac{e^{i(\tau-s)\Delta}-1}{(\tau-s)^\gamma(-\Delta)^\gamma}\right) (-\Delta)^\gamma \mathcal{C}[f,i\Delta](w_1(s),w_2(s,\zeta)) \\
&- s^\gamma\left(\frac{e^{is\Delta}-1}{s^\gamma(-\Delta)^\gamma}\right)(-\Delta)^\gamma\mathcal{C}[f,i\Delta](z_1(s), z_2(s,\zeta)).
\end{aligned}
\ee
Using the bound given in equation \eqref{exp}, and the boundedness of $e^{it\Delta}$ on Sobolev spaces, it follows from equation \eqref{R^1_r}  that we are left to provide a bound on $ \mathcal{C}[f,i\Delta](v, \bar{v})$ of the form
\be\label{bdcom}
\|\mathcal{C}[f,i\Delta](v, \bar{v})\|_{{2\gamma}} \le 
\begin{cases}
C(\|v\|_{2\gamma + 1})\ \text {if} \ \gamma >d/4 \\
C(\|v\|_{\gamma + 1 + \frac{d}{4}}) \ \text {if} \ \gamma < d/4
\end{cases}.
\ee

%

From the definition of the commutator \eqref{com}, and by using the equivalence of norms $\|\cdot\|_{s}$ and $\|\cdot \|_{H^{s}}$ on $X^{s}$ (see Section \ref{sec:notation}), it follows that for $d/4 < \gamma \le 1$ 
\begin{align*}
\||\nabla v | ^2 \bar{v} \|_{{2\gamma}} &\lesssim \||\nabla v | ^2 \bar{v} \|_{H^{2\gamma}}\\
&\lesssim \|v\|_{H^{2\gamma}} \|\nabla v \cdot \nabla v \|_{H^{2\gamma}} \\
&\lesssim \|v\|_{H^{2\gamma}}\|v\|_{H^{2\gamma +1}}^2 \\
&\lesssim C(\|v\|_{2\gamma + 1}),
\end{align*}
where we used the estimate \eqref{bilinsmooth}. 
Similarly, in the case $0\le\gamma < d/4$ we have
\begin{align*}
\||\nabla v | ^2 \bar{v} \|_{{2\gamma}} &\lesssim \|v\|_{H^{d/2 + \epsilon}} \| \nabla v \cdot \nabla v\|_{H^{2\gamma}} \\
&\lesssim \|v\|_{H^{d/2 + \epsilon}}\|v\|_{H^{\gamma + 1 + \frac{d}{4}}}^2 \\
& \lesssim C(\|v\|_{\gamma + 1 + \frac{d}{4}}), 
\end{align*}
where to obtain the first line we used estimate \eqref{estim3}, to go from the first to the second line we used the estimate \eqref{estim2} and concluded using the fact that $d/2 + \epsilon < \gamma + 1 + \frac{d}{4}$ together with the equivalence of norms on $X^{\gamma + 1 + \frac{d}{4}}$.

We can bound the second term in the commutator-type term \eqref{com} in the same manner to obtain the desired bound \eqref{bdcom}.
The estimate \eqref{R1r_bound} on $R^1_r$ follows immediately.

We now deal with the approximation of the second error term $R^2_r$ by approximating each of the exponentials appearing in the arguments of the commutator terms, namely on $w_1, w_2, z_1 $ and $z_2$. Given the form of the commutator \eqref{com} and of $w_1,w_2,z_1,z_2$, each term to approximate will either be of the form
\be\label{err1}
\left\lVert w \nabla ((e^{i\xi\Delta} -1) u) \cdot \nabla z \right\lVert
\ee
or 
\be\label{err2}
\left\lVert \left((e^{i\xi\Delta} -1) w\right) \nabla u \cdot \nabla z \right\lVert
\ee
for $\xi \in [0,\tau]$, and where we use the boundedness of $e^{it\Delta}$ on Sobolev spaces ($t\in\mathbb{R}$).
We can approximate \eqref{err1} as follows, given any $\epsilon >0$,
\begin{align*}
\left\lVert w \nabla ((e^{i\xi\Delta} -1) u) \cdot \nabla z \right\lVert
&\lesssim \xi^\gamma
\begin{cases}
 \|w\|_{H^{2\gamma}} \left\lVert \nabla (-\Delta)^\gamma \left(\frac{(e^{i\xi\Delta} -1)}{ (-\xi\Delta)^{\gamma}} u\right) \right\lVert \|\nabla z\|_{H^{2\gamma}} \ \text {if} \ \gamma >d/4\vspace{1mm} \\ 
\|w\|_{H^{\frac{d}{2} + \epsilon}} \left\lVert \nabla (-\Delta)^\gamma \left(\frac{(e^{i\xi\Delta} -1)}{(-\xi\Delta)^{\gamma}} u\right) \right\lVert_{H^{\frac{d}{4} -\gamma}} \|\nabla z \|_{H^{\frac{d}{4} + \gamma}} \ \text {if} \ \gamma <d/4
\end{cases}\vspace{1mm} \\
&\le\xi^\gamma
\begin{cases}
 C(\|w\|_{{2\gamma}}, \|u \|_{{2\gamma + 1 }}, \|z\|_{{2\gamma + 1 }})  \ \text {if} \ \gamma >d/4 \vspace{1mm}\\
C(\|w\|_{\frac{d}2 + \epsilon}, \|u \|_{{\gamma + 1 + \frac{d}{4}}}, \|z \|_{{\gamma + 1 + \frac{d}{4}}}) \ \text {if} \ \gamma <d/4
\end{cases},
\end{align*}
where we used the Sobolev embedding $H^{\sigma}  \hookrightarrow L^\infty$, for $\sigma > \frac{d}{2}$, and the estimate \eqref{estim1} to obtain the first inequality. To obtain the second line in the above we used the equivalence of norms, thanks to the fact that $u,v,w$ belong to $ X^{2\gamma +1}$ or $X^{\gamma + 1 +d/4}$ respectively, as well as the estimate \eqref{exp}. Hence, given that $\frac{d}{2} + \epsilon <  \gamma + 1 +\frac{d}{4} $, the term above satisfies the desired bound of the form \eqref{R1r_bound}.

Furthermore, for the expression \eqref{err2} we have,
\begin{align*}
\left\lVert \left((e^{i\xi\Delta} -1) w\right) \nabla u \cdot \nabla z \right\lVert
& \lesssim \xi^\gamma 
\begin{cases}
\left\lVert(-\Delta)^\gamma\frac{(e^{i\xi\Delta} -1)}{(-\xi\Delta)^\gamma} w\right\lVert \|\nabla u \cdot \nabla z\|_{H^{2\gamma}}  \ \text {if} \ \gamma >d/4 \\ \\
\left\lVert(-\Delta)^\gamma \frac{(e^{i\xi\Delta} -1)}{ (-\xi\Delta)^\gamma} w\right\lVert_{H^{1+\frac{d}{4}-\gamma}}\|\nabla u \cdot \nabla z\|_{H^{2\gamma}}  \ \text {if} \ \gamma <d/4
\end{cases} \\
& \lesssim \xi^\gamma 
\begin{cases}
\left\lVert(-\Delta)^\gamma\frac{(e^{i\xi\Delta} -1)}{(-\xi\Delta)^\gamma} w\right\lVert \|\nabla u \|_{H^{2\gamma}} \| \nabla z\|_{H^{2\gamma}}  \ \text {if} \ \gamma >d/4 \\ \\
\left\lVert(-\Delta)^\gamma \frac{(e^{i\xi\Delta} -1)}{ (-\xi\Delta)^\gamma} w\right\lVert_{H^{1+\frac{d}{4}-\gamma}}\|\nabla u \|_{H^{\frac{d}{4} + \gamma}} \|\nabla z\|_{H^{\frac{d}{4} + \gamma}}  \ \text {if} \ \gamma <d/4
\end{cases} \\
&\le \xi^\gamma
\begin{cases}
C(\|w\|_{{2\gamma}}, \|u \|_{{2\gamma + 1 }}, \|z\|_{{2\gamma + 1 }})  \ \text {if} \ \gamma >d/4 \\
C(\|w\|_{{\gamma + 1 + \frac{d}{4}}}, \|u \|_{{\gamma + 1 + \frac{d}{4}}}, \|z \|_{{\gamma + 1 + \frac{d}{4}}}) \ \text {if} \ \gamma <d/4,
\end{cases}
\end{align*}
where to obtain the first line we used the Sobolev embedding $H^\sigma  \hookrightarrow L^{\infty}$, $\sigma>\frac{d}{2}$, and the estimate $\|uz\| \lesssim \|u\|_{H^{1+\frac{d}{4}-\gamma}} \| z\|_{H^{2\gamma}} $, $\gamma \in [0,1]$, (see \cite[ Theorem 8.3.1]{hormander}). In order to obtain the second line in the above we again used the estimates \eqref{bilinsmooth} and \eqref{estim2}, and to obtain the third line we used the equivalence of norms on the spaces $X^s$ together with the estimate \eqref{exp}.

By approximating each of the exponentials in the commutator terms defining $R^2_r$ and by collecting the error terms which are either of the form \eqref{err1} or \eqref{err2} we recuperate the desired $\tau^\gamma$ bound in \eqref{R1r_bound}.

\end{proof}

We can conclude from Lemma \ref{lem:R1_1} that we have the desired bound \eqref{R1rbound}, and hence the bound \eqref{localToProve} on $\mathcal{R}_1(\tau,t_n)$. 
\end{proof}
We now continue with the bound on the two remaining terms $(\mathcal{R}_2 + \mathcal{R}_3)(\tau,t_n)$, which asks for less regularity assumptions than the boundedness of $\mathcal{R}_1(\tau,t_n)$, as shown below.


\begin{prop}\label{prop:R23}
For $\gamma\in [0,1] $, we have the following fractional bound,
\be\label{R23}
\left\lVert(\mathcal{R}_2+ \mathcal{R}_3)(\tau,t_n)\right\lVert \le C\left(\sup_{[0,T]}\| u(t)\|_{{2\gamma}},\sup_{[0,T]}\| u(t)\|_{{\sigma}}\right)\tau^{2+\gamma},
\ee
given any $\sigma > \frac{d}{2}$. In particular we have the bound,
\be\label{localToProve23}
\left\lVert(\mathcal{R}_2 +\mathcal{R}_3) (\tau,t_n) \right\lVert \le
\begin{cases}
C_{T,\gamma}\left(\sup_{t\in [0,T]}\|u(t) \|_{{2\gamma +1}}\right) \tau^{2+ \gamma}, \ \ \text{if} \ \gamma > \frac{d}{4}\vspace{1mm}\\
C_{T,\gamma}\left(\sup_{t\in [0,T]}\|u(t) \|_{{\gamma +1 + \frac{d}{4}}}\right) \tau^{2+ \gamma},  \ \ \text{if} \ 0 \le \gamma < \frac{d}{4} \\ 
\end{cases},
\ee
for $0 \le t_n \le T$.
\end{prop}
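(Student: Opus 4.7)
Since each of $\mathcal{R}_2$ and $\mathcal{R}_3$ is individually only of size $O(\tau^2)$—by a naive use of Duhamel's formula for $u(t_n+s)$ in $\mathcal{R}_3$ and by the explicit prefactor $\tau/2$ in $\psi_I^{\tau/2}$ together with a Lipschitz argument for $\mathcal{R}_2$—the desired estimate $O(\tau^{2+\gamma})$ must come from a cancellation between them. This cancellation is the quantitative counterpart of the symmetric design of the scheme \eqref{num}, and the fractional gain $\tau^\gamma$ is extracted from the smoothing estimate \eqref{exp} applied to terms controlled in $H^{2\gamma}$, together with the bilinear estimates \eqref{bilinsmooth}--\eqref{estim3}.

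\paragraph{Expansion of $\mathcal{R}_3$.} First, I would use Duhamel's formula to write
\begin{equation*}
u(t_n+s)-e^{is\Delta}u(t_n)=\int_0^s e^{i(s-\sigma)\Delta}f\bigl(u(t_n+\sigma),\bar u(t_n+\sigma)\bigr)\,d\sigma,
\end{equation*}
and then, using the trilinearity of the nonlinearity $f(u,\bar u)=-iu^2\bar u$, expand $f(u(t_n+s))-f(e^{is\Delta}u(t_n))$ as a part linear in the above Duhamel integral, plus a quadratic-and-cubic remainder which is $O(s^2)$ in $L^2$ by the bilinear estimates, granted that $u\in C([0,T],H^\sigma)$ for $\sigma>d/2$. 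Integrated in $s$ over $[0,\tau]$, this remainder already contributes only $O(\tau^3)\le O(\tau^{2+\gamma})$, so the only potentially problematic contribution is the linear one.

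\paragraph{Expansion of $\mathcal{R}_2$.} For $\mathcal{R}_2$, the implicit identity from \eqref{num} gives $\varphi^\tau(u(t_n))-e^{i\tau\Delta}u(t_n)=\Psi^\tau(u(t_n),\varphi^\tau(u(t_n)))$, which by \eqref{A2.2} and the a priori bound \eqref{implBd} from Theorem~\ref{thm:fpt} is $O(\tau)$ in $X^\sigma$. Expanding $\psi_I^{\tau/2}$ to first order in its second argument around $e^{i\tau\Delta}u(t_n)$ yields
\begin{equation*}
\mathcal{R}_2=-D\psi_I^{\tau/2}\bigl(e^{i\tau\Delta}u(t_n)\bigr)\cdot\Psi^\tau\bigl(u(t_n),e^{i\tau\Delta}u(t_n)\bigr)+\mathrm{rem},
\end{equation*}
where the remainder is $O(\tau^3)$: the quadratic Taylor remainder contributes $\tau\cdot O(\tau^2)$ thanks to the $\tau$-prefactor in $\psi_I^{\tau/2}$, and the replacement $\Psi^\tau(u,\varphi^\tau(u))\rightsquigarrow\Psi^\tau(u,e^{i\tau\Delta}u)$ costs $\tau\cdot\|\varphi^\tau(u)-e^{i\tau\Delta}u\|=O(\tau^2)$, again via Proposition~\ref{prop:S}.

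\paragraph{Cancellation and main obstacle.} The principal task is then to match the leading term of $\mathcal{R}_2$ with the linear part of $\mathcal{R}_3$. Using $\tau\varphi_1(\pm i\tau\Delta)=\int_0^\tau e^{\pm is\Delta}\,ds$, I would rewrite $\psi_E^{\tau/2}(u(t_n))$ and $\psi_I^{\tau/2}(e^{i\tau\Delta}u(t_n))$ as integrals over $s\in[0,\tau]$ of trilinear terms in the free flows $e^{is\Delta}u(t_n)$, $e^{-is\Delta}\bar u(t_n)$; the symmetric discretization \eqref{h-approx} underlying the scheme is precisely the rule that, once $D\psi_I^{\tau/2}\cdot\Psi^\tau$ is expanded, matches the leading double-integral $\int_0^\tau e^{i(\tau-s)\Delta}\partial_j f(e^{is\Delta}u(t_n),\cdot)\int_0^s e^{i(s-\sigma)\Delta}f(e^{i\sigma\Delta}u(t_n),\cdot)d\sigma\,ds$ arising from the linear part of $\mathcal{R}_3$. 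The discrepancies that survive this matching are operator differences of the form $(e^{i\xi\Delta}-1)$ with $\xi\in[0,\tau]$ acting on trilinear products of $u$ and $\bar u$; these are bounded by $\tau^\gamma\|\cdot\|_{H^{2\gamma}}$ via \eqref{exp}, and the bilinear estimates reduce the $H^{2\gamma}$ norm of the products to $C(\|u\|_{2\gamma},\|u\|_\sigma)$, yielding \eqref{R23}. The bound \eqref{localToProve23} then follows from \eqref{R23} by Sobolev embedding, bounding $\|u\|_{2\gamma}$ by $\|u\|_{2\gamma+1}$ when $\gamma>d/4$ and by $\|u\|_{\gamma+1+d/4}$ when $\gamma<d/4$, and absorbing $\|u\|_\sigma$ in both cases. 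The hardest step will be unwinding this matching explicitly and tracking every $(e^{i\xi\Delta}-1)$-factor so that no more than $2\gamma$ derivatives of $u$ are ever paid for.
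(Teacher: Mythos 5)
Your proposal is correct and follows essentially the same route as the paper: Taylor/Duhamel expansion of $u(t_n+s)$ in $\mathcal{R}_3$ isolating a second-order leading term plus an $O(\tau^{2+\gamma})$ remainder, a matching expansion of $\mathcal{R}_2$ via the a priori bound on $\varphi^\tau(u(t_n))-e^{i\tau\Delta}u(t_n)$ and a first-order Taylor expansion of $\psi_I^{\tau/2}$, cancellation of the two second-order terms up to $(e^{i\xi\Delta}-1)$- and $\varphi_1$-discrepancies, and extraction of the fractional gain $\tau^{\gamma}$ from \eqref{exp} measured in $H^{2\gamma}$ with the bilinear estimates closing the bounds. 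The paper organizes the bookkeeping slightly differently (explicit intermediate terms $\tilde{\mathcal{E}}_{2,1}$ and $\mathcal{E}_4$, and a separate $\tau^{1+\gamma}$ bound on $\varphi^\tau(u(t_n))-e^{i\tau\Delta}u(t_n)-\tau f^n$), but the ideas coincide.
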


\begin{proof}[Proof of Proposition \ref{prop:R23}]
First, we rewrite the error term $\mathcal{R}_3(\tau,t_n)$ by making suitable Taylor expansions on~$f$. We start by expanding $u(t_n+ \zeta)$ locally up to second order :
\be\label{smoothExp}
u(t_n + \zeta) = e^{i\zeta \Delta}u(t_n) + \zeta f^n + \tilde{R}(\zeta, t_n)
\ee
where $f^n = f(u(t_n),\bar{u}(t_n))$ and 
\be\label{Rtilde}
\tilde{R}(\zeta, t_n) = \int_0^\zeta e^{i(\zeta-s)\Delta} f(u(t_n+s), \bar{u}(t_n+s))ds - \zeta f^n.
\ee
Using the above expansion for $u$ we rewrite the error term as
\begin{align}\label{E^1}
\mathcal{R}_3(\tau, t_n) = \int_0^\tau e^{i(\tau-\zeta) \Delta}&\Bigg( f\left(e^{i\zeta \Delta}u(t_n) + \zeta f^n + \tilde{R}(\zeta, t_n), \right.\Big.\left.\Big. e^{-i\zeta \Delta}\bar{u}(t_n) + \zeta \overline{f^n} + \overline{\tilde{R}(\zeta, t_n)}\right) \Big.\\ \Big.\nonumber
&  - f\left(e^{i\zeta\Delta}u(t_n), e^{-i\zeta\Delta}\bar{u}(t_n)\right) \Bigg) d\zeta.
\end{align}
For notational convenience we let $a_1 := e^{i\zeta \Delta}u(t_n)+ \zeta f^n$.
By Taylor expanding $f$ around $(a_1, \bar{a}_1)$ and $(e^{i\zeta\Delta} u(t_n),e^{-i\zeta\Delta} \bar{u}(t_n))$ respectively we obtain,
\begin{align}\label{Taylor12}
f\left(a_1 + \tilde{R}(\zeta, t_n), \bar{a}_1 + \overline{\tilde{R}(\zeta, t_n)}\right) = f\left(a_1, \bar{a}_1\right) + E_{1}(\zeta) \\ \nonumber
f\left(a_1, \bar{a}_1\right) = f\left(e^{i\zeta\Delta} u(t_n), e^{-i\zeta\Delta} \bar{u}(t_n)\right) + E_{2}(\zeta)
\end{align}
where
\begin{align}\label{E1}
&E_1(\zeta) = \int_0^1 \partial_{v_1}f\left(a_1 + \theta \tilde{R}(\zeta,t_n), \bar{a}_1 + \theta\overline{\tilde{R}(\zeta,t_n)} \right)\cdot \tilde{R}(\zeta,t_n) \\ \nonumber
&\qquad\qquad\qquad+ \partial_{v_2} f\left( a_1 + \theta \tilde{R}(\zeta,t_n), \bar{a}_1 + \theta\overline{\tilde{R}(\zeta,t_n)} \right)\cdot \overline{\tilde{R}(\zeta,t_n)}d\theta \\\label{E2}
&E_2(\zeta) = \zeta\int_0^1 [ \partial_{v_1}f\left(e^{i\zeta\Delta} u(t_n)+ \theta\zeta f^n,e^{-i\zeta\Delta} \bar{u}(t_n)+\theta\zeta\overline{f^n}\right)\cdot f^n \\ \nonumber
& \qquad\qquad\qquad +  \partial_{v_2}f\left(e^{i\zeta\Delta} u(t_n)+ \theta\zeta f^n,e^{-i\zeta\Delta} \bar{u}(t_n)+\theta\zeta\overline{f^n}\right)\cdot \overline{f^n} ]d\theta,
\end{align}
and $\partial_{v_1}f$ together with $\partial_{v_2}f$ are given in \eqref{deriv}.
Hence, plugging the above into equation \eqref{E^1} yields,
\begin{align}\label{E1E2}
\mathcal{R}_3(\tau, t_n) =&
 \int_0^\tau e^{i(\tau - \zeta)\Delta} E_{1}(\zeta) d\zeta
+ \int_0^\tau e^{i(\tau-\zeta) \Delta}E_2(\zeta) d\zeta\\ \nonumber
=& \mathcal{E}_1(\tau,t_n) + \mathcal{E}_2(\tau,t_n).
\end{align}

We first deal with the term in the decomposition above which is of highest order and hence is the simplest to bound, namely the third order term $\mathcal{E}_1(\tau,t_n)$.
In view of obtaining the bound \eqref{R23} on $(\mathcal{R}_2 + \mathcal{R}_3)(\tau,t_n)$, we first show that $\mathcal{E}_1(\tau,t_n)$ satisfies this bound.

\begin{lem}\label{lem:E1} We have the following fractional bound on $\mathcal{E}_1(\tau,t_n)$,
$$
\|\mathcal{E}_1(\tau,t_n)\| \le C(\sup_{[0,T]}\| u(t)\|_{{\sigma}}, \sup_{[0,T]}\| u(t)\|_{{2\gamma}})\tau^{2+\gamma},
$$
for any $\gamma \in [0,1]$, and $\sigma > d/2$.
\end{lem}
\begin{proof}
It follows from \eqref{E1} that in order to obtain the above bound we need to show that

$$
\|E_{1}(\zeta) \| \le C (\sup_{[0,T]}\|u(t)\|_{{\sigma}}, \sup_{[0,T]}\|u(t)\|_{2\gamma})\zeta^{1+\gamma}.
$$
By equation \eqref{E1} and by using the Sobolev embedding $H^\sigma \hookrightarrow L^\infty$ we have that for all $\sigma >\frac{d}{2}$,
\be\label{E1.1_norm}
\begin{split}
\|E_{1}(\zeta)\| 
&\le \sup_{\theta \in ]0,1[} \biggl( \|{ \partial_{v_1}}f \left(a_1 + \theta \tilde{R}(\zeta,t_n), \bar{a}_1 + \theta\overline{\tilde{R}(\zeta,t_n)} \right)\|_{\sigma}\\
&\qquad\qquad+ \|{ \partial_{v_2}}f \left(a_1 + \theta \tilde{R}(\zeta,t_n), \bar{a}_1 + \theta\overline{\tilde{R}(\zeta,t_n)} \right)\|_{\sigma} \biggr) 
\|\tilde{R}(\zeta, t_n) \| \\
&\le C\biggl( \sup_{t \in [0,T]} \|u(t)\|_{\sigma}, \sup_{(\zeta, t)\in [0,\tau]\times [0,T] } \|\tilde{R}(\zeta, t)\|_{\sigma}\biggr)
\|\tilde{R}(\zeta,t_n) \|,
\end{split}
\ee
where the last inequality follows by using the explicit form of the derivatives \eqref{deriv}, the bilinear inequality \eqref{bilinsmooth}, the first estimate of equation \eqref{A2.2}, and the fact that $e^{i\zeta\Delta}$ is an isometry on Sobolev spaces.
Next, we show that
\be\label{E2}
\sup_{(\zeta,t)\in  [0,\tau]\times[0,T]} \|\tilde{R}(\zeta,t) \|_{\sigma} < \infty, \quad \text{and} \quad
 \|\tilde{R}(\zeta,t_n) \| \le C_T(\sup_{ [0,T]}\|u(t)\|_{{\sigma}}, \sup_{ [0,T]}\|u(t)\|_{{2\gamma}}) \zeta^{1+\gamma}.
\ee
We obtain the first bound by using the first estimate of equation \eqref{A2.2} on $f$ with $r = \sigma$,
$$
\sup_{(\zeta, t)\in  [0,\tau]\times[0,T]}\|\tilde{R}(\zeta,t) \|_{\sigma} \le \tau C(\sup_{t \in [0,T]}\|u(t)\|_{\sigma}) < +\infty.
$$
Next, we obtain the second fractional estimate of equation \eqref{E2} by making the following decomposition,
\begin{align}\label{Rtilde2}
\tilde{{R}}(\zeta,t_n) &= \tilde R_1(\zeta,t_n) + \tilde R_2(\zeta,t_n),
\end{align}
with
$$
\tilde R_1(\zeta,t_n) =  
\int_0^\zeta (\zeta - s)^\gamma \frac{(e^{i(\zeta-s)\Delta} -1)}{(-(\zeta - s)\Delta)^\gamma} (-\Delta)^\gamma f(u(t_n+s), \bar{u}(t_n+s))ds$$ 
and
$$
\tilde R_2(\zeta,t_n) = \int_0^\zeta f(u(t_n+s), \overline{u}(t_n+s) )ds - \zeta f^n.
$$
Using the fractional bound \eqref{exp} and the nonlinear estimate \eqref{A2.2} we have that $\tilde R_1(\zeta,t_n)$ is bounded by
\begin{align*}
\| \tilde R_1(\zeta,t_n) \|
&\le \zeta^{1+\gamma} C(\sup_{t\in [0,T]}\|u(t)\|_{{\sigma}}, \sup_{t\in [0,T]}\|u(t)\|_{{2\gamma}}).
\end{align*}

Next, by iterating Duhamel's formula in the first term of $\tilde{R}_2(\zeta,t_n)$ we obtain the following expansion for $\tilde{R}_2(\zeta,t_n)$, 
\begin{align}\label{Rtilde3}
\tilde{R}_2(\zeta,t_n)
= \tilde{R}_{2,1}(\zeta, t_n) + \tilde{R}_{2,2}(\zeta, t_n),
\end{align}
where 
$$
\tilde{R}_{2,1}(\zeta, t_n) = \int_0^{\zeta} f(e^{is\Delta}u(t_n), e^{-is\Delta}\bar{u}(t_n))ds - \zeta f^n,
$$
\begin{align*}
\tilde{R}_{2,2}(\zeta, t_n) &=  \int_0^\zeta 
\int_0^1 { \partial_{v_1}}f\left(e^{is\Delta}u(t_n) + \theta \tilde{R}_{2,2}^r(s), e^{-is\Delta}\bar{u}(t_n) + \theta\overline{\tilde{R}_{2,2}^r(\zeta)} \right)\cdot \tilde{R}_{2,2}^r(s) \\ 
&\qquad\qquad\qquad+ { \partial_{v_2}} f\left( e^{is\Delta}u(t_n) + \theta \tilde{R}_{2,2}^r(s), e^{-is\Delta}\bar{u}(t_n) + \theta\overline{\tilde{R}_{2,2}^r(s)} \right)\cdot \overline{\tilde{R}_{2,2}^r(s)}d\theta ds,
\end{align*}
and $\tilde{R}_{2,2}^r(s) = \int_0^s e^{i(s-s_1)\Delta}f(u(t_n+s_1), \bar{u}(t_n+s_1))ds_1$. 

Using the nonlinear estimate \eqref{A2.2}, one easily obtains the bound 
$$
\|\tilde{R}_{2,2}(\zeta, t_n)\| \le C(\sup_{[0,T]}\|u(t)\|_{{\sigma}})\zeta^2, 
$$
and hence in particular the $\zeta ^{1+\gamma}$ bound for $\gamma \in[0,1]$. 

In order to deal with the first term in the decomposition \eqref{Rtilde3}, we Taylor expand the exponentials appearing in $\tilde{R}_{2,1}(\zeta, t_n)$ which yields,
\begin{align*}
\tilde{R}_{2,1}(\zeta, t_n) &=  \int_0^\zeta 
\int_0^1 { \partial_{v_1}}f\left(u(t_n) + \theta (e^{is\Delta}-1)u(t_n), \bar{u}(t_n) + \theta(e^{-is\Delta}-1)\bar{u}(t_n)\right)\cdot \tilde{R}_{2,1}^r(s) \\ 
&\qquad\qquad\qquad+ { \partial_{v_2}} f\left( u(t_n) + \theta (e^{is\Delta}-1)u(t_n), \bar{u}(t_n) + \theta(e^{-is\Delta}-1)\bar{u}(t_n) \right)\cdot \overline{\tilde{R}_{2,1}^r(s)}d\theta ds,
\end{align*}
with $\tilde{R}_{2,1}^r(s) = s^\gamma  \frac{(e^{is\Delta} -1)}{ (-s\Delta)^\gamma} (-\Delta)^\gamma u(t_n) $.
Using the fractional estimate \eqref{exp} we obtain the bound 
$$\|\tilde{R}_{2,1}^r(s)\| \le C_T(\sup_{[0,T]} \|u(t)\|_{{2\gamma}})s^{\gamma}.$$ 
Therefore, by using the usual bilinear inequality \eqref{bilinsmooth} we achieve the desired bound on $\tilde{R}_{2,1}$;
$$\|\tilde{R}_{2,1}(\zeta) \|\le C(\sup_{[0,T]} \|u(t)\|_{\sigma}, \sup_{[0,T]} \|u(t)\|_{{2\gamma}})\zeta^{1+\gamma},
$$
which concludes the proof of Lemma \ref{lem:E1}.

\end{proof}


Now that we have dealt with the third order term $\mathcal{E}_1(t_n,\tau)$ in the decomposition \eqref{E1E2} of $\mathcal{R}_3(t_n,\tau)$, we are left to consider the term $\mathcal{E}_2(t_n,\tau)$, together with the term $\mathcal{R}_2(t_n,\tau)$ defined at equation \eqref{R_2}. 
First, we rewrite $\mathcal{E}_2(t_n,\tau)$ as a second order term with a third order remainder. The goal being that this second order term cancels with the second order part of the term $\mathcal{R}_2(t_n,\tau)$, thereby only leaving third order remainders.

By using the definition of ${\partial_{v_1}}f$ and ${\partial_{{v_2}}}f$ given in \eqref{deriv} we have that
\begin{align*}
E_2(\zeta) = -i\zeta \int_0^1 [2(e^{i\zeta \Delta}u(t_n) + \theta \zeta f^n)(e^{-i\zeta \Delta}\bar{u}(t_n) + \theta \zeta \overline{ f^n})f^n  + (e^{i\zeta \Delta}u(t_n) + \theta\zeta f^n)^2 \overline{f^n}] d\theta.
\end{align*}
We can separate the first order terms in the above with the higher order ones to obtain the following decomposition for $E_2(\zeta)$,
$$
E_2(\zeta) =  \tilde{E_2}(\zeta) + E_{2}^r(\zeta),
$$
with
$$
\tilde{E_2}(\zeta)= -i\zeta \left( 2(e^{i\zeta \Delta}u(t_n))(e^{-i\zeta \Delta}\bar{u}(t_n))f^n + (e^{i\zeta \Delta}u(t_n))^2 \overline{f^n}\right),
$$
and where
one can easily show using the nonlinear estimate \eqref{A2.2} with $r = \sigma$ that $E_{2}^r(\zeta)$ has the following bound: $\|E_{2}^r(\zeta)\| \le C_T(\sup_{[0,T]}\|u(t)\|_{\sigma})\zeta^2$.

We let
$$\tilde{\mathcal{E}}_2(\tau,t_n) = \int_0^\tau e^{i(\tau-\zeta) \Delta}\tilde{E}_2(\zeta) d\zeta \quad\text{and}\quad 
\tilde{\mathcal{E}}_2^r(\tau,t_n) = \int_0^\tau e^{i(\tau-\zeta) \Delta}\tilde{E}_2^r(\zeta) d\zeta,
$$
where the error term produced has the bound $\| \tilde{\mathcal{E}}_2^r(\tau,t_n)\| \le C(\sup_{[0,T]}\|u(t)\|_{\sigma})\tau^3$, which in particular satisfies the $\tau^{2+\gamma}$ bound for $\gamma \in [0,1]$.

It remains to show that the sum of the remaining terms to bound $(\mathcal{R}_2+\tilde{\mathcal{E}}_2 )(\tau,t_n)$ also satisfy the $\tau^{2+\gamma}$ bound given in equation \eqref{R23}. In view of this, one last approximation step is made on the term $\tilde{\mathcal{E}}_2 (\tau,t_n)$ before estimating its sum with the term $\mathcal{R}_2 (\tau,t_n)$.
By Taylor expanding around $\zeta = \tau$ the function
$\zeta \mapsto e^{i(\tau-\zeta)\Delta}(\tilde{E}_2(\zeta)/\zeta)$ 
we obtain the following approximation of $\tilde{\mathcal{E}}_2$,
$$
\tilde{\mathcal{E}}_2(\tau,t_n) = \tilde{\mathcal{E}}_{2,1}(\tau,t_n) + \tilde{\mathcal{E}}_{2,1}^r (\tau,t_n),
$$
with 
\be\label{E_21}
\tilde{\mathcal{E}}_{2,1}(\tau,t_n) = -i\frac{\tau^2}{2}\left( 2(e^{i\tau \Delta}u(t_n))(e^{-i\tau \Delta}\bar{u}(t_n))f^n + (e^{i\tau \Delta}u(t_n))^2 \overline{f^n} \right),
\ee
and where $\tilde{\mathcal{E}}_{2,1}^r (\tau)$ satisfies
$$
\|\tilde{\mathcal{E}}_{2,1}^r (\tau)\| \le C(\sup_{[0,T]}\|u(t)\|_{{\sigma}}, \sup_{[0,T]}\|u(t)\|_{{2\gamma}})\tau^{2+\gamma}.
$$
The above estimate follows from the definition of $\tilde E_{2}$, the estimate \eqref{exp}, and the expansion 
$$
e^{i\tau\Delta}v+ (e^{i\zeta\Delta} - e^{i\tau\Delta})v  = e^{i\tau\Delta}v + \left(\zeta^\gamma\frac{(e^{i\zeta\Delta} - 1)}{ (-\zeta\Delta)^\gamma} + \tau^\gamma\frac{(1 - e^{i\tau\Delta})}{ (-\tau\Delta)^\gamma}\right) (-\Delta)^\gamma v.
$$

The first estimate \eqref{R23} of Proposition \ref{prop:R23} then follows directly once the following lemma is established.
\begin{lem}\label{lem:R2E2}
The remaining error terms have the following bound,
\be\label{EstimR2E2}
\|\mathcal{R}_2(\tau, t_n) + \tilde{\mathcal{E}}_{2,1}(\tau,t_n)\|\le C(\sup_{[0,T]}\|u(t)\|_{{\sigma}}, \sup_{[0,T]}\|u(t)\|_{{2\gamma}})\tau^{2+\gamma},
\ee
for any $\sigma >d/2$.
\end{lem}
\begin{proof}
We perform a very similar analysis as was done on the term $\mathcal{R}_3(\tau,t_n)$ to the term $\mathcal{R}_2(\tau,t_n)$ to show that it can be decomposed as a second and third order term. We then conclude by showing that this second order part coincides with the second order term $-\tilde{\mathcal{E}}_{2,1}(\tau,t_n)$.

First, we expand $\varphi^\tau(u(t_n))$ as follows, 
\begin{align}\label{varphiExp}
\varphi^\tau(u(t_n)) 
&= e^{i\tau \Delta}u(t_n) + \tau f^n+ R^1(\tau, t_n),
\end{align}
where 
$$R^1(\tau, t_n) = \Psi^\tau(u(t_n), \varphi^\tau(u(t_n))) - \tau f^n,
$$
and $\Psi^\tau$ is the nonlinear part of the numerical scheme \eqref{num}.
We show the same bounds in $X^\sigma$ and $L^2$ given in equation \eqref{E2} on the error term ${R}^1(\tau,t_n)$. First, using the estimate \eqref{A2.2} with $r=\sigma$ we have that 
$$
\sup_{t\in[0,T]}\|R^1(\tau,t) \|_{\sigma} \le \tau C(\sup_{[0,T]} \|u(t)\|_{\sigma}, \sup_{[0,T]}\|\varphi^\tau(u(t)) \|_{\sigma}) \le C(\sup_{[0,T]} \|u(t)\|_{\sigma}) < \infty,
$$
where we use the bound 
\be\label{varphiBd}
\|\varphi^\tau(u(t))\|_{\sigma} \le 2\| u(t)\|_{\sigma},
\ee
which follows from equation \eqref{implBd}.
We note that in what follows we will always use the above a priori bound \eqref{varphiBd} when bounding the term $\varphi^\tau(u(t_n))$, and hence will not show its explicit dependance. 
In order to obtain the $L^2$-bound on $R^1(\tau,t_n)$ we use the following expansion
\begin{align}\label{Rloc2}\nonumber
R^1(\tau,t_n) 
&= - i \frac{\tau}{2} e^{i\tau\Delta} \left( (u(t_n))^2 \varphi_1(-i\tau\Delta) \overline{u(t_n)} \right) - i\frac{\tau}{2} \Bigg( \left(e^{i\tau\Delta}u(t_n)+\Psi^\tau(u(t_n), \varphi^\tau(u(t_n)))\right)^2 \Bigg.\\
&\Big. \qquad \varphi_1(i\tau \Delta)\left(e^{-i\tau\Delta}\overline{u(t_n)}+\overline{\Psi^\tau(u(t_n), \varphi^\tau(u(t_n)))} \right)\Bigg) -\tau f^n
\end{align}
where we used equation~\eqref{num} and simply inserted into the term $\Psi^\tau(u(t_n), \varphi^\tau(u(t_n)))$ the definition of the scheme
$$
\varphi^\tau(u(t_n))=e^{i\tau\Delta}u(t_n)+\Psi^\tau(u(t_n), \varphi^\tau(u(t_n))).
$$
By expanding about $e^{i\tau\Delta}u(t_n)$ the second term in \eqref{Rloc2}, and then by approximating the remaining exponentials using the usual fractional estimate \eqref{exp}, we obtain that the first-order terms cancel leaving the following bound on $R^1(\tau,t_n)$,
\begin{align*}
\|R^1(\tau,t_n)\| \le&   \tau^{1+\gamma} C(\sup_{[0,T]}\|u(t)\|_{2\gamma}, \sup_{[0,T]}\|u(t)\|_{\sigma})+ \tau C(\sup_{n\tau\le T} \|\Psi^\tau(u(t_n), \varphi^\tau(u(t_n)))\|_{\sigma})\\
\le& \tau^{1+\gamma} C (\sup_{[0,T]}\|u(t)\|_{2\gamma}, \sup_{[0,T]}\|u(t)\|_{\sigma}),
\end{align*}
where in order to obtain the second line we used the estimate
$$
\| \Psi^\tau(u(t_n), \varphi^\tau(u(t_n)))\| \le C(\sup_{[0,T]}\|u(t)\|_{\sigma})\tau.
$$
We make note that in order to approximate the $\varphi_1$ functions appearing in \eqref{Rloc2} we use the following expansion
\be\label{phi1approx}
\tau \varphi_1(i\tau\Delta)v = \int_0^\tau e^{is\Delta}ds v = \tau v + \int_0^\tau \frac{(e^{is\Delta}-1)}{ (-s\Delta)^\gamma} s^\gamma ds (-\Delta)^\gamma v.
\ee
We conclude from the above calculations that the error term $R^1(\tau,t_n)$ satisfies the $X^\sigma$ and $L^2$ bounds,
\be\label{R1bd}
\sup_{t\in [0,T]} \|R^1(\tau,t) \|_{\sigma} < \infty \quad \text{and} \quad
 \|R^1(\tau,t_n) \| \le C(\sup_{ [0,T]}\|u(t)\|_{{\sigma}}, \sup_{ [0,T]}\|u(t)\|_{{2\gamma}}) \tau^{1+\gamma}.
\ee

We now return to the definition \eqref{R_2} of $\mathcal{R}_2(\tau,t_n)$ and to the expansion \eqref{varphiExp}. By letting $b_1 = e^{i\tau\Delta} u(t_n) + \tau f^n$ we have
\be\label{Taylor34}
\begin{aligned}
\psi_{I}^{\tau/2}\left( b_1 + R^1(\tau,t_n)\right) &= \psi_{I}^{\tau/2}(b_1) + \mathcal{E}_3(\tau,t_n) \\
\psi_{I}^{\tau/2}\left(b_1\right) &= \psi_{I}^{\tau/2}(e^{i\tau\Delta}u(t_n)) + \mathcal{E}_4(\tau,t_n),
\end{aligned}
\ee
where using the estimates in equation \eqref{R1bd} and the definition of $\psi_{I}^{\tau/2}$ it follows that
$$
\|\mathcal{E}_3(\tau,t_n) \| \le \tau^{2+\gamma} C(\sup_{ [0,T]}\|u(t)\|_{{\sigma}}, \sup_{ [0,T]}\|u(t)\|_{{2\gamma}}).
$$
Furthermore, it follows from equation \eqref{Taylor34} that by isolating the second order terms with the higher order ones we have the following expansion for $\mathcal{E}_4(\tau,t_n)$,

\begin{align}\label{E4exp}
\mathcal{E}_4(\tau,t_n) = -i\frac{\tau^2}{2} &\Big(
 2f^n(e^{i\tau \Delta}u(t_n))  (\varphi_1(i\tau\Delta) (e^{-i\tau \Delta}\overline{u(t_n)})) \Big.\\\nonumber
&\Big. +(e^{i\tau \Delta}u(t_n))^2\varphi_1(i\tau\Delta)\overline{f^n}\Big) + \mathcal{E}_4^r(\tau,t_n),
\end{align}
where from a simple calculation one obtains that $\mathcal{E}_4^r(\tau,t_n)$ satisfies
$$
\|\mathcal{E}_4^r(\tau,t_n)\| \le C(\sup_{ [0,T]}\|u(t)\|_{{\sigma}})\tau^3 \le C_T \tau^{2+\gamma}.
$$

By approximating the $\varphi_1$ functions in \eqref{E4exp} following the expansion given in equation \eqref{phi1approx}, and by using once again the fractional estimate \eqref{exp} we conclude from the above equations together with definition \eqref{E_21} of $\tilde{\mathcal{E}}_{2,1}$ that the bound \eqref{EstimR2E2} is met. This concludes the proof of Lemma \ref{lem:R2E2}.
\end{proof}

The proof of the above lemma concludes the proof of the first estimate \eqref{R23} on $(\mathcal{R}_2 + \mathcal{R}_3)(\tau,t_n)$ of Proposition \ref{prop:R23}.

The second estimate \eqref{localToProve23} of Proposition \ref{prop:R23} follows directly from the first estimate \eqref{R23} by noticing that for some small $\epsilon >0$ and with $\sigma = d/2 + \epsilon$, we have that $2\gamma$ and $\sigma$ are smaller than $2\gamma + 1$ for $\gamma > d/4$, and are also smaller than $\gamma + 1+d/4$ for $d\le 3$ (and $\gamma \in [0,1]$).
\end{proof}

\begin{rem}
Another way of writing the local error terms is as follows,
\begin{align}\label{otherDecomp}
u(t_n + \tau) - \varphi^{\tau}\bigl(u(t_n)\bigr)  &= \mathcal{R}(\tau,t_n) + \widetilde{\mathcal{R}}(\tau,t_n),
\end{align}
with
\begin{align*}
\mathcal{R}(\tau,t_n) &= \int_0^{\tau/2} e^{i(\tau-s)\Delta}\Bigl(f\bigl(u(t_n + s), \bar{u}(t_n + s)\bigr) - f\bigl(e^{is\Delta}u(t_n), e^{-is\Delta}\bar{u}(t_n)\bigr)\Bigr)ds \\
&+  \int_{\tau/2}^\tau e^{i(\tau-s)\Delta}\Bigl(f\bigl(u(t_n + s), \bar{u}(t_n + s)) - f\bigl(e^{i(s-\tau)\Delta}u(t_{n+1}), e^{-i(s-\tau)\Delta}\bar{u}(t_{n+1})\bigr)\Bigl)ds
\end{align*}
and
\begin{align*}
\widetilde{\mathcal{R}}(\tau,t_n) &= \left(\int_0^{\tau/2} e^{i(\tau-s)\Delta}f\bigl(e^{is\Delta}u(t_n), e^{-is\Delta}\bar{u}(t_n)\bigr)ds - \psi_{E}^{\tau/2}\bigl(u(t_n)\bigr)\right) \\
&\quad + \left( \int_{\tau/2}^\tau e^{i(\tau-s)\Delta}f\bigl(e^{i(s-\tau)\Delta}u(t_{n+1}),e^{-i(s-\tau)\Delta}\bar{u}(t_{n+1})\bigr)ds - \psi_{I}^{\tau/2}\Bigl(\varphi^{\tau}\bigl(u(t_n)\bigr)\Bigr) \right).
\end{align*}
 The above error decomposition uses the fact that on $[0,\frac{\tau}{2}]$ we center the approximation at the left-end point and on $(\frac{\tau}{2},\tau]$ at the right-end point. Hence, on each interval respectively we iterate the Duhamel expansions
\begin{align*}
&u(t_n + s) = e^{is\Delta}u(t_n) + \int_0^s e^{i(s-s_1)\Delta}f\bigl(u(t_n + s_1), \bar{u}(t_n + s_1)\bigr)ds_1, \quad s \in [0,\frac{\tau}{2}], \\
&u(t_n + s) = e^{i(s-\tau)\Delta}u(t_{n+1}) - \int_s^\tau e^{i(s-s_1)\Delta}f\bigl(u(t_n + s_1), \bar{u}(t_n + s_1)\bigr)ds_1, \quad s\in (\frac{\tau}{2}, \tau].
\end{align*}
Using the tools in Section~\ref{sec:local} one can bound the local error terms \eqref{otherDecomp} in an analogous manner.
\end{rem}

\subsection{Stability}\label{sec:stab}

\begin{thm}\label{thm:stab}
Let $R>0, \ s \ge 0$. There exists $\tau_R>0$ and $\sigma>d/2$ such that for any $ \tau \le \tau_R$ and $w,v \in X^\sigma$, such that $\| w\|_{\sigma} \le R$ and $\|v \|_{\sigma} \le R$ we have,
$$
\|\varphi^\tau(v) - \varphi^\tau(w)\|_{s} \le e^{\tau C_R}\|v-w\|_{s},
$$
where $C_R$ denotes a generic constant depending on $R$ (and on $s$). 
\end{thm}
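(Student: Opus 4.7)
The plan is to leverage the scheme's implicit structure together with the a priori bound \eqref{implBd} from Theorem~\ref{thm:fpt} to derive the estimate as a fixed-point argument. First I would write the difference of the two numerical solutions explicitly using the definition \eqref{num}:
\begin{equation*}
\varphi^\tau(v) - \varphi^\tau(w) = e^{i\tau\Delta}(v-w) + \bigl(\psi_E^{\tau/2}(v) - \psi_E^{\tau/2}(w)\bigr) + \bigl(\psi_I^{\tau/2}(\varphi^\tau(v)) - \psi_I^{\tau/2}(\varphi^\tau(w))\bigr).
\end{equation*}
Taking the $X^s$-norm on both sides, the linear part contributes exactly $\|v-w\|_s$ since $e^{i\tau\Delta}$ is an isometry on $X^s$ for every $s\ge 0$.

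Next I would bound each nonlinear contribution. The operators $\varphi_1(\pm i\tau\Delta)$ are bounded on $X^s$ uniformly in $\tau$ by the spectral calculus, since $|\varphi_1(ix)|\le 1$ for $x\in\mathbb{R}$. Applying the nonlinear Lipschitz estimate~\eqref{A2.2} with some $\sigma > d/2$ (taking $\sigma = s$ when $s > d/2$, and $\sigma = d/2 + \epsilon$ otherwise) to both $\psi_E^{\tau/2}$ and $\psi_I^{\tau/2}$, then using the a priori bounds $\|\varphi^\tau(v)\|_\sigma \le 2R$ and $\|\varphi^\tau(w)\|_\sigma \le 2R$ from Theorem~\ref{thm:fpt}, yields a bound of the form
\begin{equation*}
\|\varphi^\tau(v) - \varphi^\tau(w)\|_s \le \|v-w\|_s + \tau C_R \|v - w\|_s + \tau C_R \|\varphi^\tau(v) - \varphi^\tau(w)\|_s,
\end{equation*}
where $C_R$ depends continuously on $R$ and on $s$.

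The final step is to absorb the implicit term on the left. Choose $\tau_R$ small enough so that $\tau_R C_R \le 1/2$; then for $\tau \le \tau_R$,
\begin{equation*}
\|\varphi^\tau(v) - \varphi^\tau(w)\|_s \le \frac{1+\tau C_R}{1-\tau C_R} \|v-w\|_s.
\end{equation*}
Using the elementary inequality $\frac{1+x}{1-x} \le e^{4x}$ valid for $x\in[0,1/2]$ converts this into the desired exponential growth factor $e^{\tau \widetilde C_R}$, relabelling the constant.

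The main obstacle I anticipate is mostly bookkeeping rather than conceptual: one needs the Lipschitz estimate \eqref{A2.2} to hold not only for $f$ but for the slightly modified nonlinearities $v^2 \varphi_1(\pm i\tau\Delta)\bar{v}$, which requires checking that $\varphi_1(\pm i\tau\Delta)$ maps $X^\sigma$ into itself with norm bounded uniformly in $\tau$. This is immediate from the spectral resolution used to define $X^s$ in Section~\ref{sec:notation}, but must be stated carefully when $\Omega$ is a smooth bounded domain (where $X^\sigma$ encodes Dirichlet compatibility) rather than the torus. The choice of $\sigma$ must also be made coherent across the two regimes $s\le d/2$ and $s > d/2$, but both cases are uniformly handled by \eqref{A2.2}.
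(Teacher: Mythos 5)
Your proposal is correct and follows essentially the same route as the paper's proof: apply the Lipschitz estimate \eqref{A2.2} (with $\sigma = d/2+\epsilon$ for $s\le d/2$ and $\sigma=s$ otherwise), invoke the a priori bound \eqref{implBd} from Theorem~\ref{thm:fpt} to control $\|\varphi^\tau(v)\|_\sigma$ and $\|\varphi^\tau(w)\|_\sigma$ by $2R$, absorb the implicit term for $\tau$ small, and bound $(1+\tau C)/(1-\tau C)$ by an exponential. Your additional remark on the uniform boundedness of $\varphi_1(\pm i\tau\Delta)$ on $X^s$ is a detail the paper leaves implicit but does not change the argument.
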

\begin{proof}[Proof of Theorem \ref{thm:stab}]
Using the second estimate in equation \eqref{A2.2} we have
$$
\| \varphi^\tau(v) - \varphi^\tau(w)\|_{s} \le\left(1+ \tau C_{s}(\|v\|_{\sigma}, \|w\|_{\sigma})\right)\|v-w\|_{s} + \tau C_{s}(\| \varphi^\tau(v)\|_{\sigma}, \| \varphi^\tau(w)\|_{\sigma}) \| \varphi^\tau(v) -  \varphi^\tau(w) \|_{s},
$$
with $\sigma = d/2 + \epsilon$ if $s \le d/2$ and $\sigma = s$ if $s > d/2$.
By Theorem \ref{thm:fpt} we have that there exists $\tau_R > 0$ such that for all $\tau \le \tau_{R}$ we have the bounds: $\|\varphi^\tau(v)\|_{\sigma} \le 2R$ and $\|\varphi^\tau(w)\|_{\sigma} \le 2R $. Hence, it follows from the above that,
$$
\| \varphi^\tau(v) - \varphi^\tau(w)\|_{s} \le \frac{(1+\tau C_s(R,R))}{(1-\tau C_s(2R,2R))}\|v-w\|_{s} \le e^{\tau C_R}\|v-w\|_{s},
$$
for some $C_R > 0$.
\end{proof}

It remains to combine the stability argument presented in Section \ref{sec:stab} together with the  local error bounds of Section \ref{sec:local} to prove the global convergence result stated in Proposition \ref{prop:toProve}.

\begin{proof}[Proof of Proposition \ref{prop:toProve}] We let $e^n= u^n - u(t_n)$, with $e^0 = 0$.
First, thanks to Proposition \ref{prop:R1bd} and \ref{prop:R23} we have the local error bound in $L^2$ required for the global convergence analysis, where the regularity requirements on $u$ have been optimized depending on the fractional order of convergence desired. In order to apply the stability bound stated in Theorem \ref{thm:stab} and to conclude with a Lady Windermere's fan type argument, one needs to show the following uniform bound on the numerical solution,
\be\label{bound_sigma}
\|u^n\|_{\sigma} \le M_T, \quad \forall n\tau \le T,
\ee 
for some $\sigma > d/2$ and $M_T>0$.

We let $\sigma = d/2 + \epsilon$, for some small $\epsilon > 0$. To obtain the bound \eqref{bound_sigma} we show that there exists $\delta>0$, a constant $C_{R_n} = C_T(\|u^n\|_{\sigma})$ depending on $\| u^n\|_{\sigma}$ and on $\sup_{[0,T]}\|u(t) \|_{\sigma}$, and some $\tau_{R_n} >0$ also depending on $\|u^n\|_{\sigma}$ such that the following global error bound is met,
\be\label{induc-sigma}
\|e^{n+1}\|_{\sigma} \le \|\varphi^\tau(u(t_n))-u(t_{n+1})\|_{\sigma} + \|\varphi^\tau(u^n) -\varphi^\tau(u(t_n)) \|_{\sigma}
\le 
C_{T,\gamma} \tau^{1+\delta} + e^{\tau C_{R_n}}\|e^n\|_{\sigma}
\ee
for all $\tau \le \tau_{R_{n}}$.
One can obtain the second term in the above estimate for $\tau \le \tau_{R_n}$, where $\tau_{R_n}$ depends on $\| u_n\|$ and on $\sup_{[0,T]}\|u(t) \|_{\sigma}$, by applying Theorem \ref{thm:stab} with $s=\sigma$. Hence, it remains to obtain the first term in the above estimate, which corresponds to the local error bound in $X^\sigma$. Namely, by letting $\mathcal{R}(\tau,t_n) = u(t_{n+1}) - \varphi^\tau(u(t_n))$ we show that there exists $\delta >0$ such that 
\be\label{R_sigma_0}
\| \mathcal{R}(\tau,t_n)\|_{\sigma} \le C_{T,\gamma}\tau^{1+\delta},
\ee
where $C_{T,\gamma}$ is a constant depending on $T$ and on the regularity assumptions on $u$ (which in turn depend on $\gamma$, the fractional order of convergence required).
We establish the above local error estimate \eqref{R_sigma_0} by using the following interpolation bound,
\begin{align}\label{interpolation}
||\mathcal{R}(\tau,t_n)||_{\sigma} \le 
\left\{
\begin{array}{ll}
\| \mathcal{R}(\tau,t_n)\|^\theta \| \mathcal{R}(\tau,t_n)\|^{1-\theta}_{2\gamma + 1} \quad & \text{if} \ \gamma > \frac{d}{4}\vspace{1mm} \\ 
\| \mathcal{R}(\tau,t_n)\|^{\widetilde{\theta}} \| \mathcal{R}(\tau,t_n)\|^{1-\widetilde{\theta}}_{\gamma +1 + \frac{d}{4}}  \quad & \text{if} \ 0 \le \gamma < \frac{d}{4} 
\end{array} \right. ,
\end{align}
where $(\theta,\widetilde{\theta}) \in (0,1)^2$ satisfies $\sigma = (1-\theta)(2\gamma +1) = (1-\widetilde \theta)(\gamma + 1 + d/4)$. We have already established the $L^2$-bound on $\mathcal{R}(\tau,t_n)$ in Section \ref{sec:local}, which is given by, 
\be\label{L2locbd}
\| \mathcal{R}(\tau,t_n)\| \le \tau^{2+\gamma} 
\left\{
\begin{array}{ll}
C_{T,\gamma}\left(\sup_{t\in [0,T]}\|u(t) \|_{{2\gamma +1}}\right)\quad & \text{if} \ \gamma > \frac{d}{4}\\
C_{T,\gamma}\left(\sup_{t\in [0,T]}\|u(t) \|_{{\gamma +1 + \frac{d}{4}}}\right)  \quad& \text{if} \ 0 \le \gamma < \frac{d}{4} 
\end{array} \right. 
.
\ee
To obtain the $X^{2\gamma +1}$ and $X^{\gamma +1 + \frac{d}{4}}$ bound on $\mathcal{R}(\tau,t_n)$ we simply express the local error using Duhamel's formula and the scheme \eqref{num},
\begin{align*}
\mathcal{R}(\tau,t_n) &= \int_0^\tau e^{i(\tau-s)\Delta}f(u(t_n+s), \overline{u}(t_n+s)) ds - i \frac{\tau}{2} e^{i\tau \Delta}\left( (u(t_n))^2 \varphi_1(-i \tau\Delta)\bar{u}(t_n) \right)\\
&\qquad\qquad - i\frac{\tau}{2} \left( (\varphi^\tau(u(t_n)))^2\varphi_1(i\tau \Delta)\overline{\varphi^\tau(u(t_n)} \right).
\end{align*}
One can bound each of the above terms separately using the first estimate in equation \eqref{A2.2} with $r = 2\gamma +1$ and $r = \gamma +1 + \frac{d}{4}$ together with equation \eqref{implBd} (with $R=\sup_{[0,T]} \|u(t) \|_{\sigma}$) to obtain that there exists some $\tilde\tau_0 > 0$ depending on $u_0$ and $T$ such that for all $\tau\le \tilde\tau_0$
\begin{align}\label{local-r_1}
||\mathcal{R}(\tau,t_n)||_{r} &\le C(\sup_{[0,T]}||u(t)||_{\sigma}, \sup_{[0,T]}||u(t)||_{r}) \tau
\le C_{T,r} \tau.
\end{align}
We conclude that the bound \eqref{R_sigma_0} follows from equation \eqref{interpolation} (with $\delta = (1+\gamma)\theta$ for $\gamma > d/4$ and $\delta = (1+\gamma)\widetilde\theta$ for $\gamma < d/4$), and where the constant $C_{T,\gamma}$ is given in equation \eqref{L2locbd} by the $L^2$ local error bound.
We then proceed by induction on \eqref{induc-sigma} to obtain that there exists a $\tau_0>0$ which depends on $T $ and $u_0$ for which the uniform bound \eqref{bound_sigma} is true for all $\tau\le \tau_0$.

Finally, by taking $s=0$, $\sigma = d/2 + \epsilon$, and $R = \max\{M_T, \sup_{[0,T]}\| u(t)\|_{\sigma} \}$ in Theorem \ref{thm:stab} yields the existence of a $\tau_R $ which depends on $T$ and $u_0$ such that for all $\tau\le \tau_R$,
$$
\| e^{n+1}\| \le C_{T,\gamma}\tau^{2+\gamma} + e^{\tau C_R}\| e^n\|, \quad n\tau \le T,
$$
where $C_{T,\gamma}$ is given in equation \eqref{L2locbd}. The global error bound of Proposition \ref{prop:toProve} follows by iterating the above estimate and taking $\tau_{\text{min}} = \min \{ \tau_0, \tau_R\}$, which concludes the proof.
\end{proof}
\subsection*{Acknowledgements}
{\small
This project has received funding from the European Research Council (ERC) under the European Union's Horizon 2020 research and innovation programme (grant agreement No. 850941).
The author would like to express her thanks to Katharina Schratz for her guidance, as well as David Lee, Georg Maierhofer, and Fr\'ed\'eric Rousset  for helpful discussions.
}  

\end{document}